\documentclass[12pt]{amsart}
\usepackage[T1]{fontenc}
\usepackage[utf8]{inputenc}
\usepackage{lmodern}
\usepackage{amsmath,amssymb,amsthm,mathrsfs}
\usepackage{microtype}
\usepackage[left=1.05in,right=1.05in,top=1.0in,bottom=1.03in]{geometry}
\usepackage{hyperref}
\hypersetup{colorlinks=true,linkcolor=black,citecolor=black,urlcolor=black}
\newtheorem{theorem}{Theorem}[section]
\newtheorem{proposition}[theorem]{Proposition}
\newtheorem{lemma}[theorem]{Lemma}
\newtheorem{corollary}[theorem]{Corollary}
\theoremstyle{definition}

\theoremstyle{remark}
\newtheorem{remark}[theorem]{Remark}

\newcommand{\D}{\mathbb D}
\newcommand{\B}{\mathbb B}
\newcommand{\C}{\mathbb C}

\newcommand{\OO}{\mathcal O}

\newcommand{\ord}{\operatorname{ord}}
\newcommand{\Ree}{\operatorname{Re}}
\newcommand{\arctanh}{\operatorname{arctanh}}
\newcommand{\arsinh}{\operatorname{arsinh}}

\title[Invariant metrics on a real-analytic pseudoconvex domain]{Non-comparable invariant metrics and failure of Gromov hyperbolicity\\on a real-analytic pseudoconvex domain}
\author{Armen Edigarian}
\address{Faculty of Mathematics and Computer Science, Jagiellonian University, Prof. St. \L ojasiewicza 6, 30-348 Krak\'ow, Poland}
\email{armen.edigarian@uj.edu.pl}
\subjclass[2020]{Primary 32F45; Secondary 32T20, 53C23}
\keywords{Kobayashi distance, Carath\'eodory distance, Kobayashi--Royden metric, Carath\'eodory--Reiffen metric, Gromov hyperbolicity, pseudoconvex domain, finite D'Angelo type}

\begin{document}

\begin{abstract}
We study the explicit bounded pseudoconvex domain
\[
 \Omega=\{(z_1,z_2,w)\in\C^3:
 |w|^2+|z_1z_2(z_1-z_2)|^2+|z_1|^8+|z_2|^8<1\}.
\]
Its boundary is real analytic and has finite D'Angelo type everywhere.  We prove two assertions about the invariant metrics of this domain.  First, $(\Omega,k_\Omega)$ is not Gromov hyperbolic; in fact arbitrarily fat Kobayashi geodesic triangles occur in every neighborhood of the type-$8$ point $(0,0,1)$.  Second, the Carath\'eodory--Reiffen and Kobayashi--Royden metrics are not uniformly comparable.  Along an explicit family of tangential directions the Carath\'eodory metric tends to zero while the Kobayashi metric has a positive limit.  For the corresponding distances there are points $x_j,y_j\to(0,0,1)$ such that $c_\Omega(x_j,y_j)\to0$ while $k_\Omega(x_j,y_j)\to\infty$.  This gives, among bounded real-analytic pseudoconvex domains, an affirmative answer to Problem~7.3.2 of the Several Complex Variables Problem List.
\end{abstract}

\maketitle

\section{Introduction}\label{sec:intro}

Consider the domain
\begin{equation}\label{eq:domain}
 \Omega=
 \left\{(z_1,z_2,w)\in\C^3:
 |w|^2+|z_1z_2(z_1-z_2)|^2+|z_1|^8+|z_2|^8<1
 \right\}.
\end{equation}
There are two reasons to look at this very simple domain.  Both come from the same cubic fiber, but they answer two rather different questions.

The first question concerns Gromov hyperbolicity.  If a bounded pseudoconvex domain is smoothly bounded and of finite D'Angelo type, must its Kobayashi metric be Gromov hyperbolic?  The known positive results make this a natural guess.  Balogh and Bonk proved the strongly pseudoconvex case \cite{BaloghBonk}.  Zimmer proved that for smooth bounded convex domains finite type is exactly the condition for Gromov hyperbolicity, and obtained positive results for locally convexifiable finite-type domains \cite{Zimmer}.  Fiacchi proved the finite-type statement in $\C^2$ \cite{Fiacchi}, Zhang proved it in arbitrary dimension under the Levi-corank-one hypothesis \cite{Zhang}, and a recent preprint of Sarkar proves the corresponding equivalence for smooth bounded $\C$-convex domains \cite{Sarkar}.  The general higher-dimensional pseudoconvex case is different.

\begin{theorem}\label{thm:main}
For the domain \eqref{eq:domain}:
\begin{enumerate}
\item $\Omega$ is bounded and pseudoconvex, and $\partial\Omega$ is real analytic;
\item every boundary point has D'Angelo one-type at most $8$;
\item the point
\[
 p=(0,0,1)
\]
has D'Angelo one-type exactly $8$;
\item $(\Omega,k_\Omega)$ is not Gromov hyperbolic.
\end{enumerate}
More strongly, for every neighborhood $V$ of $p$ and every $M>0$, there is a Kobayashi geodesic triangle contained in $V$ which has a point on one side at Kobayashi distance greater than $M$ from the union of the other two sides.
\end{theorem}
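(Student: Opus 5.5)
Parts (1)--(3) are direct computations, and (4) rests on a localization and scaling argument at $p$. Write $\rho=|w|^2+|f(z)|^2+|z_1|^8+|z_2|^8-1$ with $f=z_1z_2(z_1-z_2)$. Then $\rho$ is a sum of squared moduli of holomorphic functions minus $1$, so it is plurisubharmonic and $\Omega$ is pseudoconvex. Boundedness is immediate from $|w|<1$ and $|z_j|<1$. For real analyticity of $\partial\Omega$ we need $d\rho\neq0$ on $\{\rho=0\}$. If $\partial_w\rho=\bar w=0$, then $|f|^2+|z_1|^8+|z_2|^8=1$, so $z\neq0$. The Euler identity for the weighted-homogeneous pieces then gives $z\cdot\partial_z\rho=3|f|^2+4(|z_1|^8+|z_2|^8)>0$, hence $d\rho\ne0$.

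For the type bounds we use that $\Omega$ is defined by squared moduli of holomorphic maps, $\rho=\|F\|^2-1$ with $F=(w,f,z_1^4,z_2^4)$. By D'Angelo's comparison of the type of $\{\|F\|^2=1\}$ with orders of vanishing, we have to bound $\ord_0(\rho\circ\gamma)/\ord_0(\gamma)$ over holomorphic curves $\gamma$ through a boundary point $q$. Along $\gamma$, $\rho\circ\gamma=\|F\circ\gamma\|^2-1$, and the components $z_1^4,z_2^4$ already force order at most $8\ord(\gamma)$ in the $z$-directions. At $p$, the curve $t\mapsto(t,0,\sqrt{1-t^8})$ (or $(t,t,\cdot)$) lies in $\{f=0\}$ and realizes order exactly $8$, which gives (3).

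The upper bound at a general point proceeds by cases according to which coordinates of $\gamma$ move. If $w\circ\gamma$ is nonconstant with order $m$, the term $\bar w_0(w-w_0)$ already contributes order $m$ unless $w_0=0$. If $w_0=0$, the $z$-part lies on the boundary of a bounded domain in $\C^2$ that is strictly convex enough away from $0$, and we use the $z_j^4$ terms there. This case analysis is routine but needs care; it is not where the difficulty lies.

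For (4), the key is the fiber structure near $p$. After the Cayley-type change $w\mapsto$ half-plane and the anisotropic rescaling $(z,w)\mapsto(\lambda z,\lambda^{6}w')$ suggested by the weight of $|f|^2$, the domain localizes near $p$ to the model
\[
D_0=\{\Ree w'+|z_1z_2(z_1-z_2)|^2<0\},
\]
where the $|z|^8$ terms scale away. This is the first obstacle: the two weights are incompatible. Under scaling adapted to $|f|^2$ (weight $1/6$ per $z$) the octic terms vanish in the limit, whereas near the three lines $\{f=0\}$ they dominate.

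The model $D_0$ contains the complex lines $\{z_2=0\}$, $\{z_1=0\}$ and $\{z_1=z_2\}$, times a half-plane, as flat directions. I would exhibit fat triangles by taking three points far out along the three distinct lines of $\{f=0\}$ in the $z$-plane, at a common height. Geodesics between points on different lines must pass near the origin of $z$, where all three lines meet. The third side, from the line $z_2=0$ to the line $z_1=z_2$, must then stay far from a point on the first side lying out along $z_1=0$.

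Concretely, I would prove two estimates, using the product-of-disc structure and comparison with simpler domains: a lower bound $k_\Omega\ge$ (sum of Kobayashi distances in a half-plane factor and in the fiber $\{|f|^2<\epsilon\}$), and an upper bound along explicit analytic discs in $\{f=0\}\cap\Omega$. These discs, roughly $\{z_1=0,\ |z_2|^8<1-|w|^2\}$, make the three branches behave like separate tree-like arms. The lower bound uses the holomorphic functions $f$, $z_1$, $z_2$, $w$ as contractions to discs.

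The main obstacle is the lower estimate showing that a point on one side is at distance greater than $M$ from the other two sides. Quasi-geodesics in a non-Gromov-hyperbolic space are not controlled by Morse-type lemmas, so I would instead use a "separating" holomorphic function. Consider $h=z_1/z_2$, or a quotient adapted to the branches: on the relevant region it maps into a thrice-punctured-sphere neighborhood, and the hyperbolic distance from the branch $\{z_1=0\}$ to the other two branches grows as the points go out. This is combined with the decreasing property of $k$ under holomorphic maps. Making this rigorous near the singular point $z=0$, where $h$ is undefined, is where I expect the real difficulty to be. I would handle it by showing that the geodesic sides avoid a small neighborhood of $\{z=0\}$ except near their endpoints' heights, using the $|z|^8$ terms as the barrier.
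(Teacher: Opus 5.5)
Your parts (1)--(3) follow the paper in substance: the Euler identity plays the role of the radial derivative, and your test curve works. In (2), however, the step ``$\bar w_0(w-w_0)$ already contributes order $m$'' is not valid. Pure terms of $2\Ree\langle G\circ\gamma-G(q),G(q)\rangle$ from different components can cancel it, as happens along any complex tangential curve. No case analysis is needed. With $\ell=\ord_0(G\circ\gamma-G(q))\le4m$, the coefficient $\|A_\ell\|^2$ of $\zeta^\ell\bar\zeta^\ell$ in $r\circ\gamma$ cannot be cancelled by the harmonic part, so $\ord_0(r\circ\gamma)\le2\ell\le8m$.

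The genuine gap is in (4). You never exhibit a mechanism for fatness, and the configuration you describe should be thin. Fatness at a point $E$ of $[X_1,X_2]$ means $E$ is far from $[X_1,X_3]\cup[X_2,X_3]$, but you only discuss $[X_2,X_3]$. Suppose, as you suggest, the three branches of $\{f=0\}$ behave like tree-like arms meeting near $z=0$. Then $[X_1,X_2]$ and $[X_1,X_3]$ both run down the arm $\{z_1=0\}$, and the triangle is a tripod, which is $0$-thin. Tree-like behaviour is the signature of Gromov hyperbolicity, not of its failure.

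Your tools do not repair this:
\begin{itemize}
\item The ratio $z_1/z_2$ fails to be $S$-valued on all of $\{f=0\}$, not only at $z=0$: it takes the values $0,\infty,1$ on the three lines. That is exactly where your vertices sit.
\item Contraction through $f,z_1,z_2,w$ yields only the \emph{maximum} of the resulting disc distances, never their sum. Near $p$ the domain contains holomorphic images of growing bidiscs on which $k_\Omega$ is asymptotically at most the maximum of the two factor distances, so a sum lower bound has the wrong shape.
\item The rescaled model $D_0$ contains complex lines ($p$ is not $h$-extendible), so scaling gives no quantitative control of $k_\Omega$.
\item Nothing in your plan places the geodesic sides inside $V$.
\end{itemize}

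The missing idea is a quasi-flat: two independent hyperbolic directions realized simultaneously near $p$, on the level sets $\{P=a^3\}$ \emph{away} from $\{P=0\}$. Writing $(z_1,z_2)=a(u,v)$, these level sets become the fixed cubic $uv(u-v)=1$, whose ratio $u/v$ covers $S=\C\setminus\{0,1\}$. Combined with the $w$-disc, this gives the maps $\Phi_a(\zeta,\eta)=(au(\zeta),av(\zeta),W_a(\eta))$. They carry growing bidiscs into $\Omega$, because the normal margin $a^7$ beats the octic error $a^8$. Hence $\limsup_{a\to0}k_\Omega\le\max\{k_\D(\zeta,\zeta'),k_\D(\eta,\eta')\}$.

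The two lower bounds come separately. The $\eta$-direction has the exact holomorphic left inverse $\nu_a$. The bare projection to $w$ is too weak here, since it sends these points into a disc of radius $s_a$; $\nu_a$ also uses $P$. The $\zeta$-direction is bounded below by $k_S(\tau(\zeta),\tau(\zeta'))$ through Royden localization of $z_1/z_2$ on $\Omega\setminus Z$. This works because $k_\Omega(\Phi_a(\zeta,0),Z)\ge\arsinh(a^{-1/2})\to\infty$.

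Two points far apart in the $S$-direction, together with two points separated in $w$ over their midpoint, then violate the four-point condition. Lemma~\ref{lem:metric-fat} converts this into a fat triangle without any information about where geodesics run, which sidesteps the Morse-lemma obstacle you raise. Lemma~\ref{lem:ball-near-p}, via the projection to $w$, puts the whole triangle in $V$.
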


The second question concerns the two basic holomorphically contractible metrics.  Let $\gamma_\Omega$ denote the Carath\'eodory--Reiffen metric and $\kappa_\Omega$ the Kobayashi--Royden metric.  One always has
\[
 \gamma_\Omega\le \kappa_\Omega,
 \qquad
 c_\Omega\le k_\Omega.
\]
Thus uniform comparability means a reverse inequality with one constant on the whole domain.  Krantz asked whether there is a smoothly bounded pseudoconvex domain on which the Kobayashi and Carath\'eodory metrics are not comparable; this is Problem~7.3.2 in \cite{SCVProblemList}.  Earlier examples concerning non-comparability with the Bergman metric were studied by Diederich--Forn\ae ss--Herbort \cite{DiederichFornaessHerbort}.  Strongly pseudoconvex domains behave differently: the two infinitesimal metrics have the same leading boundary behavior \cite{Graham}, and the distances are asymptotically equivalent \cite{Venturini}.  On smooth finite-type domains in $\C^2$, Catlin's estimates make the invariant metrics uniformly comparable \cite{CatlinMetrics}; related positive results hold under additional higher-dimensional geometric hypotheses, for example Levi corank one \cite{BalakumarMahajanVerma}.

The same domain \eqref{eq:domain} gives the opposite behavior in $\C^3$.

\begin{theorem}\label{thm:noncomparable}
On $\Omega$, the Carath\'eodory--Reiffen and Kobayashi--Royden metrics are not uniformly comparable.  More precisely, there are points $x_j\to p$ and nonzero vectors $X_j\in\C^3$ such that
\begin{equation}\label{eq:intro-ratio}
 \frac{\gamma_\Omega(x_j;X_j)}{\kappa_\Omega(x_j;X_j)}\longrightarrow0.
\end{equation}
The distances are not uniformly comparable either.  More strongly, there are pairs $x_j,y_j\to p$ such that
\begin{equation}\label{eq:intro-distance-strong}
 c_\Omega(x_j,y_j)\longrightarrow0,
 \qquad
 k_\Omega(x_j,y_j)\longrightarrow\infty.
\end{equation}
In particular,
\begin{equation}\label{eq:intro-distance-ratio}
 \frac{c_\Omega(x_j,y_j)}{k_\Omega(x_j,y_j)}\longrightarrow0.
\end{equation}
\end{theorem}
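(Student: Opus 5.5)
We will reduce everything to the behaviour of the fiber function $P(z)=|z_1z_2(z_1-z_2)|^2+|z_1|^8+|z_2|^8$ under the anisotropic scaling that is natural near $p=(0,0,1)$. The basic observation is that the cubic $q(z)=z_1z_2(z_1-z_2)$ vanishes on three complex lines $\{z_1=0\}$, $\{z_2=0\}$, $\{z_1=z_2\}$ through the origin. Along each such line $P$ only grows like $|z|^8$, whereas in generic directions it grows like $|z|^6$. Consider points $x=(a,0,1-\delta)$ and $y=(0,a,1-\delta)$. Here $a$ lies on a scale $|a|^8\ll\delta$, so both points are "deep" on the degenerate lines but separated by a generic direction in which the cubic is large. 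The Kobayashi metric sees the generic part $|q|^2$ as a barrier. Holomorphic discs joining the two lines must pass through a region where $|q|^2$ is comparable to $\delta$, and this forces them to be small. The Carath\'eodory metric, however, is computed by bounded holomorphic functions. Any bounded function $f$ on $\Omega$ restricts on the slice $\{w=1-\delta\}$, after rescaling $z\mapsto \delta^{-1/6}z$, to a bounded function on the limiting fiber $\{|q(z)|^2<c\}$. That fiber is a non-hyperbolic region in the Carath\'eodory sense: it contains entire lines (the zero set of $q$), and bounded holomorphic functions on it are very rigid.

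First, for the infinitesimal statement, we fix $x_j=(0,0,1-\delta_j)$ and a tangential vector $X_j=(1,1,0)$ or, better, a direction along one of the lines, for instance $X=(1,0,0)$, suitably normalized. We then compute both metrics via a scaling argument. Set $\Phi_\delta(z,w)=(\delta^{-1/6}z_1,\delta^{-1/6}z_2,(w-1+\delta)/\delta)$ roughly. As $\delta\to0$ the domains $\Phi_\delta(\Omega)$ converge locally to a model $D_\infty=\{\Ree(\text{affine in } w)+|q(z)|^2<0\}$; the $|z|^8$ terms scale away. For the Carath\'eodory side we use upper semicontinuity under such limits together with the fact that on $D_\infty$ every bounded holomorphic function is constant along each line $\{q=0\}$ (Liouville along the $z_1$-axis). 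Hence $\gamma_{D_\infty}((0,0,w_0);(1,0,0))=0$, and therefore $\gamma_\Omega(x_j;X_j)/|X_j|_{\text{scaled}}\to0$.

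For the Kobayashi side we need a positive lower bound, which is where the $|z|^8$ terms matter. We must use a rescaling adapted to the line, namely $z_1\sim\delta^{1/8}$, under which $|z_1|^8$ survives. Along this rescaling the limit domain is $\{\,\cdot\,+|z_1|^8<0\}$ in the relevant variables, which is Kobayashi hyperbolic in the $z_1$-direction. We then invoke a local lower bound (for instance via plurisubharmonic peak or bumping functions $|w|^2+|z_1|^8$ restricted near the axis) to get $\kappa_\Omega(x_j;X_j)\cdot\delta_j^{1/8}\ge c>0$. Comparing the two scales, $\delta^{1/8}$ against an effectively infinite Carath\'eodory scale on the line, gives \eqref{eq:intro-ratio}. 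This mismatch is the key point: the cubic controls the Carath\'eodory functions globally, while the octic controls discs locally.

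For the distances, we take $x_j=(a_j,0,1-\delta_j)$ and $y_j=(a_j e^{i\theta},0,1-\delta_j)$, both on the $z_1$-axis line, with $|a_j|^8$ a fixed fraction of $\delta_j$. The bound $c_\Omega(x_j,y_j)\to0$ follows from the Liouville rigidity: any bounded function into $\D$, after rescaling by $\delta^{-1/6}$, nearly constant on the long segment of the $z_1$-axis. Quantitatively, we use the Cauchy estimate on a disc of radius $\sim\delta^{1/8}\gg$ in the $\delta^{-1/6}$-scaled coordinates, together with a normal families argument. For $k_\Omega\to\infty$ we use the localization of the Kobayashi distance near a boundary point of finite type, together with the hyperbolicity of the octic model: in $\delta^{1/8}$-coordinates the points stay separated while approaching the boundary, and the Kobayashi distance of the model grows like $\log$ of the ratio. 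The main obstacle is this lower bound for $k_\Omega$. Proving that Kobayashi discs cannot escape through the "generic" cubic directions requires a careful plurisubharmonic barrier of the form $\log(|w|^2+|q|^2+|z_1|^8+|z_2|^8)$ combined with Sibony-type estimates. I would try Sibony's criterion with a bounded psh function having large Levi form along $z_1$ near the axis first.
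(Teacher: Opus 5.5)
There is a genuine gap, and it is fatal. Every point, vector and pair you use lies on the zero set of $q=z_1z_2(z_1-z_2)$, and there the two metrics coincide exactly. The map $\rho(z_1,z_2,w)=(z_1,0,w)$ sends $\Omega$ into itself, since $P(z_1,0)=0$ and $|w|^2+|z_1|^8<1$ on $\Omega$. Hence $R=\Omega\cap\{z_2=0\}$ is a holomorphic retract of $\Omega$, biholomorphic to the bounded convex domain $E=\{(z_1,w):|w|^2+|z_1|^8<1\}$. On a retract, the holomorphically contractible metrics and distances of $\Omega$ agree with those of the retract. By Lempert's theorem, $\gamma_E=\kappa_E$ and $c_E=k_E$. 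Thus $\gamma_\Omega(x;X)=\kappa_\Omega(x;X)$ whenever $x\in R$ and $X$ is tangent to $R$, and $c_\Omega=k_\Omega$ on $R\times R$.

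Your choices $x_j=(0,0,1-\delta_j)$ with $X=(1,0,0)$, and the pairs $x_j=(a_j,0,1-\delta_j)$, $y_j=(a_je^{i\theta},0,1-\delta_j)$, all lie in $R$. For $X=(1,1,0)$ the same happens with the retraction $(z_1,z_2,w)\mapsto(m,m,w)$, $m=\tfrac12(z_1+z_2)$, onto the convex slice $\{z_1=z_2\}$; this uses $2|m|^8\le|z_1|^8+|z_2|^8$. So the ratio $\gamma_\Omega/\kappa_\Omega$ is identically $1$ along your sequence, and $c_\Omega(x_j,y_j)=k_\Omega(x_j,y_j)$. Moreover, with $|a_j|^8/\delta_j$ fixed, both points lie in the disc $\{(\lambda,0,1-\delta_j):|\lambda|^8<2\delta_j-\delta_j^2\}$ at asymptotically fixed relative position. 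So $k_\Omega(x_j,y_j)$ stays bounded rather than tending to infinity.

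The heuristic breaks at ``comparing the two scales''. The $\delta^{-1/6}$ blow-up gives only $\gamma_\Omega(x_j;(1,0,0))=o(\delta_j^{-1/6})$. That is consistent with the true size $\gamma_\Omega(x_j;(1,0,0))\ge\frac12(2\delta_j-\delta_j^2)^{-1/8}$, which comes from bounded functions of $(z_1,w)$ pulled back from $E$; these see the octic term at the same scale as the discs do. The blow-up limit contains the same lines, so $\kappa$ is equally degenerate at that scale. Nothing produces an ``effectively infinite Carath\'eodory scale''.

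The rigidity you allude to is the right phenomenon, but it must be used on the interior fibers $\{P=a^3\}$, $a\neq0$, not on $\{P=0\}$. After $(z_1,z_2)=a(u,v)$ these fibers become the fixed cubic $uv(u-v)=1$, a smooth projective cubic minus three points. Bounded holomorphic functions on it are constant, and normal families on the embedded exhaustion give $\gamma_\Omega\to0$ and $c_\Omega\to0$.

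The Kobayashi lower bound comes from the ratio $z_1/z_2:\Omega\setminus\{P=0\}\to\C\setminus\{0,1\}$ together with Royden localization. Localization applies because the explicit bounded function $\sqrt{1-W^2}\,P/(1-Ww)$, with $W=\sqrt{1-a^6-a^7}$, shows that $\{P=0\}$ lies at Kobayashi distance at least $\arsinh(a^{-1/2})$ from these points. The infinite Poincar\'e diameter of $\C\setminus\{0,1\}$ then yields $k_\Omega\to\infty$.

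Your proposal has neither the off-divisor placement of the points nor a usable Kobayashi lower bound. No Sibony-type barrier can repair this, because on your locus the Carath\'eodory metric is not small.
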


Thus the example answers the question in \cite[Problem~7.3.2]{SCVProblemList} already among bounded real-analytic pseudoconvex domains.  The same domain also has finite D'Angelo type everywhere, with type exactly $8$ at $p$.  The failure is local at that point.

Let us explain why the same construction produces both phenomena.  Put
\begin{equation}\label{eq:P}
 P(z_1,z_2)=z_1z_2(z_1-z_2).
\end{equation}
Then
\begin{equation}\label{eq:r}
 r(z_1,z_2,w)=|w|^2+|P(z_1,z_2)|^2+|z_1|^8+|z_2|^8-1
\end{equation}
is a global real-analytic plurisubharmonic defining function.  On the interior fibers
\[
 P(z_1,z_2)=a^3
\]
write $(z_1,z_2)=a(u,v)$.  The fiber becomes the fixed affine cubic
\begin{equation}\label{eq:cubic}
 uv(u-v)=1.
\end{equation}
The ratio $u/v$ maps this cubic onto
\[
 S=\C\setminus\{0,1\}.
\]
The cubic therefore contains a hyperbolic direction of arbitrarily large diameter.  Independently, the $w$-variable gives a disc direction.  The three orders
\[
 a^6,\qquad a^7,\qquad a^8
\]
are the relevant ones: $|P|^2$ contributes $a^6$, we keep an inward normal margin of order $a^7$, and the eighth-order terms contribute only $a^8$ on fixed compact subsets of the cubic.  Thus increasingly large parts of the two independent directions are represented inside $\Omega$ as $a\to0$.

For the Kobayashi metric, the main point is to show that passing to the ambient domain does not shorten either direction.  The $w$-direction has an explicit global holomorphic left inverse.  On
\[
 \Omega\setminus\{P=0\}
\]
the ratio
\[
 \pi(z_1,z_2,w)=\frac{z_1}{z_2}
\]
is a holomorphic map into $S=\C\setminus\{0,1\}$.  The set $\{P=0\}\cap\Omega$ tends to infinite Kobayashi distance from the points under consideration.  Royden localization then gives the required lower bound in the cubic direction.  The two lower bounds together give the four-point configurations used in the proof of Theorem~\ref{thm:main}.

For the Carath\'eodory metric the same cubic has a different property.  Its projective completion is a smooth cubic obtained by adding three points.  Every bounded holomorphic function on the affine cubic therefore extends to the compact completion and is constant.  Since larger and larger subsets of the affine cubic map into $\Omega$, bounded holomorphic functions on $\Omega$ become asymptotically constant in this direction.  This gives the vanishing of the Carath\'eodory quantities, whereas the Kobayashi lower bound remains positive by localization.  These are \eqref{eq:intro-ratio} and \eqref{eq:intro-distance-strong}.

It is also useful to see where the standard positive theories stop.  At $p$ the Levi corank is $2$.  The Catlin and D'Angelo multitypes are respectively $(1,6,6)$ and $(1,6,8)$, so $p$ is neither locally convexifiable nor $h$-extendible.  The domain is not $\C$-convex either.  We check all of this directly in Section~\ref{sec:finite-type}.

Section~\ref{sec:finite-type} deals with the boundary geometry.  Section~\ref{sec:localization} gives the distance and infinitesimal localization estimates.  Section~\ref{sec:cubic} constructs the holomorphic product maps.  Section~\ref{sec:lowerbounds} proves lower bounds in the two Kobayashi directions.  Section~\ref{sec:caratheodory} proves Theorem~\ref{thm:noncomparable}.  Sections~\ref{sec:fourpoint} and \ref{sec:triangles} prove the Gromov-hyperbolicity statements.

\section{Preliminaries}\label{sec:prelim}

We write
\[
 \D=\{\zeta\in\C:|\zeta|<1\},
 \qquad
 \D_\rho=\{\zeta\in\C:|\zeta|<\rho\}.
\]
Our normalization of the Poincar\'e distance is
\begin{equation}\label{eq:poincare}
 k_\D(0,t)=\arctanh t=\frac12\log\frac{1+t}{1-t},
 \qquad 0\le t<1.
\end{equation}
Thus
\begin{equation}\label{eq:poincare-pm}
 k_\D(-t,t)=2\arctanh t.
\end{equation}
For a domain $D\subset\C^n$, $\kappa_D$ and $k_D$ denote the Kobayashi--Royden metric and Kobayashi distance.  The Carath\'eodory--Reiffen metric and Carath\'eodory distance are denoted by $\gamma_D$ and $c_D$.  With the normalization \eqref{eq:poincare},
\begin{equation}\label{eq:caratheodory-metric}
 \gamma_D(z;X)
 =\sup_{f\in\OO(D,\D)}
 \frac{|df_zX|}{1-|f(z)|^2}.
\end{equation}
The Carath\'eodory distance is
\begin{equation}\label{eq:caratheodory-distance}
 c_D(x,y)
 =\sup_{f\in\OO(D,\D)} k_\D(f(x),f(y)).
\end{equation}
We always have
\begin{equation}\label{eq:basic-comparison}
 \gamma_D\le\kappa_D,
 \qquad
 c_D\le k_D.
\end{equation}
We repeatedly use holomorphic contraction:
\begin{equation}\label{eq:contraction}
 k_{D_2}(F(x),F(y))\le k_{D_1}(x,y)
\end{equation}
for every holomorphic map $F:D_1\to D_2$, and the analogous infinitesimal inequalities for $\kappa$ and $\gamma$.

We shall use the four-point formulation of Gromov hyperbolicity.  A metric space $(X,d)$ is Gromov hyperbolic if there is $\delta\ge0$ such that for every four points $x_1,x_2,x_3,x_4$, if
\[
 \Sigma_1=d(x_1,x_2)+d(x_3,x_4),\quad
 \Sigma_2=d(x_1,x_3)+d(x_2,x_4),\quad
 \Sigma_3=d(x_1,x_4)+d(x_2,x_3),
\]
then the two largest of $\Sigma_1,\Sigma_2,\Sigma_3$ differ by at most $2\delta$.  This is equivalent, for geodesic metric spaces, to uniform thinness of geodesic triangles; see \cite{BridsonHaefliger}.

For a smooth real hypersurface $M=\{r=0\}$ and $q\in M$, the D'Angelo one-type is
\begin{equation}\label{eq:dangelo}
 \Delta_1(M,q)
 =
 \sup_\gamma
 \frac{\ord_0(r\circ\gamma)}{\ord_0(\gamma-q)},
\end{equation}
where $\gamma:(\C,0)\to(\C^n,q)$ ranges over nonconstant holomorphic curve germs.  This number is independent of the choice of smooth defining function; see \cite{DAngelo}.

\section{The domain is smooth, pseudoconvex, and of type at most eight}\label{sec:finite-type}

Set
\[
 P(z_1,z_2)=z_1z_2(z_1-z_2)
\]
and let $\Omega$ be given by \eqref{eq:domain}.  Define
\begin{equation}\label{eq:G}
 G:\C^3\longrightarrow\C^4,
 \qquad
 G(z_1,z_2,w)=\bigl(w,P(z_1,z_2),z_1^4,z_2^4\bigr).
\end{equation}
Then
\begin{equation}\label{eq:rG}
 r=\|G\|^2-1,
 \qquad
 \Omega=G^{-1}(\B^4).
\end{equation}

\begin{proposition}\label{prop:basic-domain}
The domain $\Omega$ is bounded, connected and pseudoconvex, and its boundary is a smooth real-analytic hypersurface.
\end{proposition}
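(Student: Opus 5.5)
We will prove the four claims in turn: boundedness, connectedness, pseudoconvexity, and smoothness of the boundary. The last one is the only step with real content.

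\emph{Boundedness.} On $\Omega$ we have $|w|<1$, $|z_1|^8<1$ and $|z_2|^8<1$. Hence $\Omega\subset\D^3$.

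\emph{Pseudoconvexity.} By \eqref{eq:rG} the function $r=\|G\|^2-1$ is a sum of squared moduli of holomorphic functions minus a constant. It is therefore a global plurisubharmonic exhaustion-type defining function. Since $\Omega=\{r<0\}$ is a sublevel set of a continuous plurisubharmonic function on $\C^3$, it is pseudoconvex: $-\log(-r)$ is a plurisubharmonic exhaustion of each component.

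\emph{Connectedness.} We use the fact that $r$ is quasi-homogeneous-friendly. For $(z,w)\in\Omega$ and $t\in[0,1]$, the point $(tz_1,tz_2,tw)$ satisfies
\[
 r(tz,tw)+1=t^2|w|^2+t^6|P|^2+t^8(|z_1|^8+|z_2|^8)\le r(z,w)+1<1 .
\]
So $\Omega$ is star-shaped with respect to the origin, and in particular connected.

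\emph{Smoothness of the boundary.} Real analyticity of $\partial\Omega$ follows from the implicit function theorem once we know that $dr\neq0$ on $\{r=0\}$. The main step is to verify this nondegeneracy. Consider the Euler-type radial derivative at $t=1$, namely $\frac{d}{dt}r(tz_1,tz_2,tw)$. It equals
\[
 2|w|^2+6|P|^2+8(|z_1|^8+|z_2|^8).
\]
This is at least $2\bigl(|w|^2+|P|^2+|z_1|^8+|z_2|^8\bigr)=2(r+1)=2$ on $\partial\Omega$. Since this radial derivative is $dr$ applied to a real vector field, $dr\ne0$ on $\partial\Omega$.

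Moreover, the same monotonicity in $t$ shows that each real ray from the origin crosses $\{r=0\}$ exactly once. Consequently $\partial\Omega=\{r=0\}$, with no interior boundary pieces. Thus $\partial\Omega$ is a smooth real-analytic hypersurface. The only potential obstacle is exactly this nonvanishing of $dr$: a direct coordinate computation could vanish at points where several terms degenerate simultaneously, for instance where $P=0$ and $w=0$. The weighted radial vector field avoids any case analysis there.
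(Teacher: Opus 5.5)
Your proof is correct and follows the paper's argument essentially step by step. Boundedness comes from the terms $|w|^2+|z_1|^8+|z_2|^8$, pseudoconvexity from the plurisubharmonic exhaustion $-\log(-r)$, connectedness from star-shapedness via the radial identity, and $dr\neq0$ from the radial derivative at $t=1$; your quantitative bound $\ge 2(r+1)=2$ and the explicit check that $\partial\Omega=\{r=0\}$ are valid details the paper leaves implicit.
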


\begin{proof}
Boundedness is immediate from
\[
 |w|^2+|z_1|^8+|z_2|^8<1.
\]
The defining function $r$ is real analytic and plurisubharmonic, because it is a sum of squared moduli of holomorphic polynomials.  Since $r<0$ on $\Omega$ and $r\to0$ at the boundary, $-\log(-r)$ is a plurisubharmonic exhaustion of $\Omega$.  Thus $\Omega$ is pseudoconvex.

To verify smoothness and connectedness simultaneously, observe that for $0\le t\le1$,
\begin{equation}\label{eq:radial}
 r(tz_1,tz_2,tw)+1
 =t^2|w|^2+t^6|P(z_1,z_2)|^2+t^8\bigl(|z_1|^8+|z_2|^8\bigr).
\end{equation}
Thus $\Omega$ is star-shaped with respect to the origin and is therefore connected.  If $(z_1,z_2,w)\in\partial\Omega$, differentiation at $t=1$ gives
\begin{equation}\label{eq:radial-derivative}
 \left.\frac{d}{dt}r(tz_1,tz_2,tw)\right|_{t=1}
 =2|w|^2+6|P|^2+8\bigl(|z_1|^8+|z_2|^8\bigr)>0.
\end{equation}
Hence $dr\ne0$ on $\partial\Omega$.
\end{proof}

The finite-type estimate is particularly transparent in the representation \eqref{eq:rG}.

\begin{proposition}\label{prop:type}
For every $q\in\partial\Omega$,
\[
 \Delta_1(\partial\Omega,q)\le8.
\]
At
\[
 p=(0,0,1)
\]
one has
\[
 \Delta_1(\partial\Omega,p)=8.
\]
\end{proposition}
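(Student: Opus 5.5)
The plan is to use the representation $r=\|G\|^2-1$ from \eqref{eq:rG} and to reduce the order of vanishing of $r\circ\gamma$ to the order of vanishing of the holomorphic map $G\circ\gamma-G(q)$. For that order we then give a crude estimate that is linear in $\ord_0(\gamma-q)$.

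\emph{Step 1: the sphere part.} Fix $q\in\partial\Omega$ and a nonconstant germ $\gamma:(\C,0)\to(\C^3,q)$. Put $g=G\circ\gamma$ and $g_0=G(q)$, so that $\|g_0\|=1$. Decompose $g-g_0=h\,g_0+\varphi$, where $h=\langle g-g_0,g_0\rangle$ is a holomorphic function with $h(0)=0$, and $\varphi\perp g_0$. Then
\[
 r\circ\gamma=\|g\|^2-1=2\Ree h+|h|^2+\|\varphi\|^2 .
\]
The key observation is that the Taylor expansion of $2\Ree h$ in $t,\bar t$ contains only pure terms $t^j,\bar t^j$. By contrast, the lowest-order term of $\|\varphi\|^2$ is the mixed term $c|t|^{2m}$ with $c>0$, where $m=\ord_0\varphi$, and $|h|^2$ has order $2\ord_0h$. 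Since pure and mixed monomials cannot cancel, we get
\[
 \ord_0(r\circ\gamma)=\min(\ord_0 h,\,2\ord_0\varphi)\le 2\min(\ord_0h,\ord_0\varphi)=2\,\ord_0(g-g_0).
\]
Here we use the conventions that the order of the zero function is $\infty$, and that $g-g_0\not\equiv0$. If $g-g_0\equiv0$, the next step shows this cannot happen.

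\emph{Step 2: the finite map $G$.} Let $n=\ord_0(\gamma-q)$. Some coordinate of $\gamma-q$ vanishes to order exactly $n$.
\begin{itemize}
\item If it is the $w$-coordinate, then the first component of $g-g_0$ has order $n$.
\item If it is $z_j$ and $z_j(q)\neq0$, then $\gamma_j^4-z_j(q)^4$ has order $n$.
\item If it is $z_j$ and $z_j(q)=0$, then $\gamma_j^4$ has order $4n$.
\end{itemize}
In every case $\ord_0(g-g_0)\le4n$. Combined with Step 1, this gives $\ord_0(r\circ\gamma)\le8\,\ord_0(\gamma-q)$. Taking the supremum in \eqref{eq:dangelo} yields $\Delta_1\le8$ at every boundary point.

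\emph{Step 3: sharpness at $p$.} Take $\gamma(t)=(t,0,1)$. Then $P\circ\gamma=0$, so $r\circ\gamma=|t|^8$. This has order $8$, while $\ord_0(\gamma-p)=1$, hence $\Delta_1(\partial\Omega,p)\ge8$.

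The only delicate point is the no-cancellation claim in Step 1, namely that the harmonic part $2\Ree h$ cannot cancel the lowest mixed term of $\|\varphi\|^2$. This needs a clean bookkeeping of the monomials $t^a\bar t^b$. The rest is routine, using that $\Delta_1$ does not depend on the defining function.
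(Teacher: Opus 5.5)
Your proposal is correct and follows essentially the same route as the paper. You write $r=\|G\|^2-1$ and note that the harmonic part $2\Ree\langle G\circ\gamma-G(q),G(q)\rangle$ has only pure monomials, so it cannot cancel the positive $|t|^{2\ell}$ coefficient of $\|G\circ\gamma-G(q)\|^2$; you bound $\ell\le 4\,\ord_0(\gamma-q)$ by the same three cases; and you test the curve $(t,0,1)$ at $p$. Your orthogonal splitting $g-g_0=hg_0+\varphi$ only refines this to the exact formula $\ord_0(r\circ\gamma)=\min(\ord_0h,2\ord_0\varphi)$, which is more than needed, since the paper reads off the coefficient $\|A_\ell\|^2$ of $\zeta^\ell\bar\zeta^\ell$ directly.
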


\begin{proof}
Fix $q\in\partial\Omega$ and a nonconstant holomorphic curve germ
\[
 \gamma:(\C,0)\longrightarrow(\C^3,q).
\]
Write
\[
 m=\ord_0(\gamma-q)
\]
and
\[
 \ell=\ord_0(G\circ\gamma-G(q)),
\]
where the order of a vector-valued holomorphic germ is the minimum of the orders of its components.

We claim that
\begin{equation}\label{eq:ell-4m}
 \ell\le4m.
\end{equation}
Indeed, choose a coordinate component of $\gamma-q$ having order $m$.  If that component is the $w$-component, then the first component of $G\circ\gamma-G(q)$ has order $m$.  If it is the $z_j$-component and $q_j\ne0$, then
\[
 z_j(\zeta)^4-q_j^4
 =4q_j^3(z_j(\zeta)-q_j)+O\bigl((z_j(\zeta)-q_j)^2\bigr),
\]
so its order is $m$.  If $q_j=0$, the corresponding fourth-power component has order $4m$.  This proves \eqref{eq:ell-4m}.

Put
\[
 H(\zeta)=G\circ\gamma(\zeta)-G(q)
 =A_\ell\zeta^\ell+O(\zeta^{\ell+1}),
 \qquad A_\ell\ne0.
\]
Since $\|G(q)\|=1$,
\begin{equation}\label{eq:r-curve}
 r\circ\gamma
 =2\Ree\langle H,G(q)\rangle+\|H\|^2.
\end{equation}
The first term is harmonic.  In the second term the coefficient of
$\zeta^\ell\bar\zeta^\ell$ is $\|A_\ell\|^2>0$.  A harmonic function has no nonzero mixed coefficient, so this coefficient cannot be cancelled by the first term.  Consequently
\[
 \ord_0(r\circ\gamma)\le2\ell\le8m.
\]
Taking the supremum over $\gamma$ gives the upper bound $\Delta_1\le8$.

At $p=(0,0,1)$ consider
\[
 \gamma(\zeta)=(\zeta,0,1).
\]
Then
\[
 r\circ\gamma(\zeta)=|\zeta|^8.
\]
Thus the type at $p$ is at least $8$, and the preceding upper bound gives equality.
\end{proof}

\begin{remark}[The boundary geometry at $p$]\label{rem:geometry-at-p}
At $p=(0,0,1)$ the complex tangent space is the $(z_1,z_2)$-plane, and the Levi form of $r$ vanishes identically on it.  Thus the Levi corank is $2$, so the Levi-corank-one theorem of Zhang \cite{Zhang} does not apply.

We shall also compare the D'Angelo and Catlin multitypes.  Write $w=1-\sigma$.  Near $p$,
\begin{equation}\label{eq:normal-form-p}
 r=-2\Ree\sigma+|\sigma|^2+|P(z_1,z_2)|^2+|z_1|^8+|z_2|^8.
\end{equation}
The D'Angelo multitype at $p$ is
\[
 (1,6,8).
\]
The last entry is Proposition~\ref{prop:type}.  To compute the $2$-type, first note that every complex two-plane through the origin in the coordinates $(z_1,z_2,\sigma)$ contains a nonzero complex tangential direction.  Along such a direction the order of contact is at least $6$.  Hence the one-type of every two-plane section is at least $6$.

Conversely, take
\[
 L_\lambda=\{z_2=\lambda z_1\},
 \qquad \lambda\notin\{0,1\}.
\]
Writing $z_1=\xi$, the restriction of \eqref{eq:normal-form-p} to $L_\lambda$ is
\[
 -2\Ree\sigma+|\sigma|^2
 +|\lambda(1-\lambda)|^2|\xi|^6
 +(1+|\lambda|^8)|\xi|^8.
\]
Its one-type is exactly $6$: the curve $\sigma=0$ has contact $6$, and the same mixed-coefficient argument as in Proposition~\ref{prop:type} gives the reverse inequality for every holomorphic curve in this two-plane.  Thus the D'Angelo $2$-type is $6$.

Catlin's multitype is
\[
 (1,6,6).
\]
The weight $(1,\frac16,\frac16)$ is distinguished in the coordinates of \eqref{eq:normal-form-p}.  It remains to show that no distinguished weight preceding it in Catlin's lexicographic ordering can occur after a general adapted holomorphic change of coordinates.  Let $(\omega,y_1,y_2)$ be adapted coordinates.  On the plane $\omega=0$, write the inverse change of coordinates in the form
\[
 z=Ay+O(|y|^2),
 \qquad
 \sigma=h(y),
 \qquad
 A\in\mathrm{GL}_2(\C).
\]
The homogeneous term of degree three in $P(z(y))$ is $P(Ay)$.  If $|\alpha|=3$, then the coefficient of $y^\alpha\overline{y}^\alpha$ in the restriction of $r$ to $\omega=0$ is
\[
 |[y^\alpha]h|^2+|[y^\alpha]P(Ay)|^2.
\]
It is therefore positive whenever $[y^\alpha]P(Ay)\ne0$.  The harmonic term $-2\Ree h$ cannot cancel such a mixed coefficient, and the terms $|z_1|^8+|z_2|^8$ do not contribute in bidegree $(3,3)$.

Let the two tangential weights be ordered as $\lambda_1\ge\lambda_2$.  If $\lambda_1<1/6$, then every cubic monomial in $P(Ay)$ has holomorphic weighted degree less than $1/2$, and the preceding positive mixed coefficient has weighted degree less than $1$, which is impossible for a distinguished weight.  If $\lambda_1=1/6$ and $\lambda_2<1/6$, the same argument shows that every cubic monomial involving $y_2$ must vanish.  Hence one would have
\[
 P(Ay)=c\,y_1^3.
\]
This is impossible because $P$ is the product of three distinct linear factors, a property preserved by an invertible linear change.  Thus the Catlin multitype is $(1,6,6)$; see \cite{Catlin} for the definition and \cite{YuMultitypes} for the convex case.

For a finite-type convex hypersurface the Catlin and D'Angelo multitypes agree \cite{YuMultitypes}.  Since both multitypes are local biholomorphic invariants, $p$ is not locally convexifiable.  Their inequality also shows that $p$ is not $h$-extendible; see \cite{YuWeighted}.  The corresponding weighted model
\[
 \{(z_1,z_2,\sigma):2\Ree\sigma>|P(z_1,z_2)|^2\}
\]
contains complex affine lines, for example $\{z_1=0,\ \sigma=1\}$.
\end{remark}

\begin{remark}[The domain is not $\C$-convex]\label{rem:not-c-convex}
The $\C$-convex results do not cover the example either.  Take
\[
 0<c<\frac14,
 \qquad
 c^8<\varepsilon<\frac{c^6}{16},
 \qquad
 |w_0|^2=1-c^8-\varepsilon.
\]
On the complex line $\{z_1=c,\ w=w_0\}$ the intersection with $\Omega$ is
\begin{equation}\label{eq:c-convex-slice}
 c^2|z_2(c-z_2)|^2+|z_2|^8<\varepsilon.
\end{equation}
It contains $z_2=0$ and $z_2=c$.  On the other hand \eqref{eq:c-convex-slice} implies
\[
 |z_2(c-z_2)|<\frac{\sqrt\varepsilon}{c}<\frac{c^2}{4}.
\]
On the vertical line $z_2=c/2+iy$ one has
\[
 |z_2(c-z_2)|=\frac{c^2}{4}+y^2\ge\frac{c^2}{4}.
\]
Hence the section in \eqref{eq:c-convex-slice} does not meet the line $\Ree z_2=c/2$, while it contains the points $z_2=0$ and $z_2=c$ on opposite sides of that line.  The section is therefore disconnected, and $\Omega$ is not $\C$-convex.
\end{remark}

We shall later use actual geodesics, so we first note that the Kobayashi metric is proper.

\begin{lemma}\label{lem:proper}
The metric space $(\Omega,k_\Omega)$ is a proper geodesic metric space.
\end{lemma}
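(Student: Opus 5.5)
Our plan is to reduce the statement to completeness of $(\Omega,k_\Omega)$ and then invoke the Hopf--Rinow theorem for length spaces.

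First we show that $k_\Omega$ is complete. The target ball $\B^4$ is bounded and strongly convex, and $G$ is a polynomial map of $\C^3$ with $\Omega=G^{-1}(\B^4)$. We use a \emph{proper} map to compare. The map $G$ is finite: the components $z_1^4,z_2^4,w$ already determine $(z_1,z_2,w)$ up to finitely many choices. The map $G\colon\Omega\to\B^4$ is also proper, since $\|G\|\to1$ as one approaches $\partial\Omega$ and $\Omega$ is bounded. Properness alone does not transfer completeness through the contraction inequality, so we argue with a holomorphic peak function instead.

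Fix $q\in\partial\Omega$. The linear function $\ell_q(x)=\langle G(x),G(q)\rangle$ is a bounded holomorphic function on $\Omega$ with $|\ell_q|<1$. It satisfies $\ell_q(x)\to1$ as $x\to q$ and $|\ell_q(x)|<1$ for $x\in\Omega$. Moreover, $|\ell_q(x)|\to1$ forces $G(x)\to G(q)$. For $x$ in a Kobayashi ball $B=B_{k_\Omega}(x_0,R)$, contraction under $\ell_q\colon\Omega\to\D$ gives $k_\D(\ell_q(x_0),\ell_q(x))\le R$. Hence $|\ell_q|\le t_R<1$ uniformly on $B$, with $t_R$ independent of $q$ because $|\ell_q(x_0)|\le\|G(x_0)\|<1$. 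Consequently $\|G(x)\|\le t_R$ on $B$: choosing $q$ with $G(q)$ close to $G(x)/\|G(x)\|$ would otherwise give $|\ell_q(x)|$ close to $\|G(x)\|$. The image $G(\partial\Omega)$ covers every direction needed here. By properness, $\{x:\|G(x)\|\le t_R\}$ is a compact subset of $\Omega$. In fact this part is simpler than it looks: $k_\Omega(x_0,x)\ge k_{\B^4}(G(x_0),G(x))$, and $\{\|G\|\le t\}$ is compact in $\Omega$, so no peak functions are needed at all. Therefore closed Kobayashi balls are compact, and $(\Omega,k_\Omega)$ is proper. The same argument applies with $c_\Omega$ in place of $k_\Omega$, since $G$ is a bounded holomorphic map.

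Next we obtain geodesics. The distance $k_\Omega$ is the integrated form of the upper semicontinuous Kobayashi--Royden metric $\kappa_\Omega$ (Royden), so $k_\Omega$ is an inner (length) metric. It induces the Euclidean topology, since $\Omega$ is bounded and therefore Kobayashi hyperbolic. A complete, locally compact length space is proper and geodesic by the Hopf--Rinow--Cohn-Vossen theorem; see \cite{BridsonHaefliger}. The main point to watch is exactly this length-space property. We cite Royden's theorem that $k_\Omega$ equals the infimum of $\kappa_\Omega$-lengths of piecewise $C^1$ curves. The length structure induced by $k_\Omega$ then coincides with this one, so $k_\Omega$ is a length metric and Hopf--Rinow applies.
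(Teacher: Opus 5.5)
Your final argument is the paper's: contraction under $G\colon\Omega\to\B^4$ places each closed $k_\Omega$-ball inside $G^{-1}$ of a compact $k_{\B^4}$-ball, properness of $G$ then gives compactness, and Royden's integrated form together with Hopf--Rinow yields geodesics. Your explicit remark that $k_\Omega$ induces the Euclidean topology, which the paper uses tacitly, is a good addition. You should, however, delete the peak-function detour. It is unnecessary and has loose steps; for example, $|\ell_q(x)|\to1$ only forces $G(x)$ toward the circle $\{e^{i\theta}G(q)\}$. Also, your claim that properness does not transfer completeness through contraction is false, as your own closing argument shows.
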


\begin{proof}
The holomorphic map $G$ in \eqref{eq:G} maps $\Omega$ properly into $\B^4$.  Indeed, if $K\Subset\B^4$, then $\|G\|\le1-\eta$ on $G^{-1}(K)$ for some $\eta>0$, so $G^{-1}(K)$ is a closed subset of a bounded sublevel set of $r$ lying a positive Euclidean distance from $\partial\Omega$.

Fix $x_0\in\Omega$ and $R>0$.  By holomorphic contraction,
\[
 k_{\B^4}(G(x_0),G(x))\le k_\Omega(x_0,x).
\]
Hence the closed $k_\Omega$-ball of radius $R$ around $x_0$ is contained in the inverse image under $G$ of the closed $k_{\B^4}$-ball of radius $R$ around $G(x_0)$.  The latter is compact in $\B^4$, so properness of $G$ implies compactness of the $k_\Omega$-ball.

The Kobayashi distance is the integrated distance of the Kobayashi--Royden metric.  Thus $(\Omega,k_\Omega)$ is a proper length space, and every proper length space is geodesic; see \cite[Chapter~I.3]{BridsonHaefliger}.
\end{proof}

\section{Localization away from a distant analytic set}\label{sec:localization}

The ratio $z_1/z_2$ will be used as a holomorphic map into $S=\C\setminus\{0,1\}$.  Such an $S$-valued map is available only after removing the set $\{P=0\}$.  The following form of Royden localization says that this causes arbitrarily little loss when the removed set is far away in the Kobayashi distance \cite{Royden}.

\begin{lemma}[Distance localization]\label{lem:localization}
Let $D$ be a Kobayashi-hyperbolic domain, let $U\subset D$ be open, and let
\[
 \pi:U\longrightarrow Y
\]
be holomorphic into a Kobayashi-hyperbolic complex manifold $Y$.  Suppose $x,y\in U$ and choose $M$ such that
\[
 k_D(x,y)<M<d_0,
 \qquad
 d_0:=k_D(x,D\setminus U).
\]
Then
\begin{equation}\label{eq:localization}
 k_Y(\pi(x),\pi(y))
 \le
 \coth(d_0-M)\,k_D(x,y),
\end{equation}
or equivalently
\begin{equation}\label{eq:localization-lower}
 k_D(x,y)
 \ge
 \tanh(d_0-M)\,k_Y(\pi(x),\pi(y)).
\end{equation}
\end{lemma}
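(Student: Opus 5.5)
The plan is to reduce the distance estimate to Royden's infinitesimal localization lemma and then integrate along a near-minimizing path that stays inside $U$. The infinitesimal statement is as follows. For $z\in U$ and $X\in\C^n$,
\[
 \kappa_U(z;X)\le\coth\bigl(k_D(z,D\setminus U)\bigr)\,\kappa_D(z;X).
\]
This is Royden's lemma in our normalization \eqref{eq:poincare}. Its proof, which I would recall rather than redo, goes like this. Take a holomorphic disc $f:\D\to D$ with $f(0)=z$ and $f'(0)=X/\kappa_D(z;X)$ (up to $\varepsilon$). Consider the subdisc $\D_s$, where $s=\tanh(k_D(z,D\setminus U))$. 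If $\zeta\in\D_s$, then
\[
 k_D(z,f(\zeta))\le k_\D(0,\zeta)<k_D(z,D\setminus U),
\]
so $f(\D_s)\subset U$. Rescaling $\D_s$ to $\D$ gives $\kappa_U(z;X)\le\kappa_D(z;X)/s$.

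Next fix $\varepsilon>0$ and choose a piecewise $C^1$ curve $\alpha:[0,1]\to D$ from $x$ to $y$ with $\kappa_D$-length $L<M$ and $L\le k_D(x,y)+\varepsilon$. This is possible because $k_D$ is the integrated form of $\kappa_D$ and $k_D(x,y)<M$. Every point $\alpha(t)$ satisfies $k_D(x,\alpha(t))\le L<M$. By the triangle inequality,
\[
 k_D(\alpha(t),D\setminus U)\ge d_0-L>d_0-M>0 .
\]
Hence $\alpha$ lies in $U$. Since $\coth$ is decreasing, the infinitesimal estimate gives
\[
 \kappa_U(\alpha(t);\alpha'(t))\le\coth(d_0-M)\,\kappa_D(\alpha(t);\alpha'(t)).
\]
Integrating, and using $k_U\le\int\kappa_U$, we get
\[
 k_U(x,y)\le\coth(d_0-M)\,(k_D(x,y)+\varepsilon).
\]
Letting $\varepsilon\to0$ and applying holomorphic contraction \eqref{eq:contraction} to $\pi:U\to Y$ yields
\[
 k_Y(\pi(x),\pi(y))\le k_U(x,y).
\]
This gives \eqref{eq:localization}, and \eqref{eq:localization-lower} is the same inequality rewritten using $1/\coth=\tanh$.

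One point needs care: whether $U$ is connected, since $k_U$ should be taken on the connected component containing $x$. The path $\alpha$ lies in that component, so this causes no issue. The main obstacle I expect is getting the normalization of the Royden constant right, namely $\tanh$ versus $\tanh$ of half the distance. With $k_\D(0,t)=\arctanh t$, the radius of the disc fitting in $U$ is exactly $\tanh(k_D(z,D\setminus U))$, so the constant $\coth$ is consistent with the statement.
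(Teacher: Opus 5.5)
Your proof is correct and follows essentially the same route as the paper. You take a near-minimizing $\kappa_D$-path, note that it stays at $k_D$-distance greater than $d_0-M$ from $D\setminus U$, fit discs of radius $\tanh(d_0-M)$ into $U$ at each of its points, and integrate the resulting infinitesimal bound. The only cosmetic difference is the order of the last two steps: you integrate $\kappa_U$ to bound $k_U$ and then contract by $\pi$, whereas the paper first pushes the infinitesimal bound forward by $\pi$ and then integrates $\kappa_Y$.
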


\begin{proof}
By Royden's theorem, $k_D$ is the integrated distance of $\kappa_D$.  Choose a piecewise $C^1$ curve $\gamma$ from $x$ to $y$ with $\kappa_D$-length less than $M$.  If $q\in\gamma$, then
\[
 k_D(q,D\setminus U)
 \ge d_0-k_D(x,q)>d_0-M.
\]
Let $f:\D\to D$ be a holomorphic disc with $f(0)=q$.  If
\[
 0<\rho<\tanh(d_0-M),
\]
then
\begin{equation}\label{eq:disc-in-U}
 f(\D_\rho)\subset U.
\end{equation}
Indeed, if $f(\zeta)\notin U$ for some $|\zeta|<\rho$, contraction would give
\[
 d_0-M
 <k_D(q,D\setminus U)
 \le k_D(q,f(\zeta))
 \le k_\D(0,\zeta)
 <d_0-M,
\]
a contradiction.

Restrict $f$ to $\D_\rho$ and rescale.  This gives
\[
 \kappa_U(q;X)\le\rho^{-1}\kappa_D(q;X).
\]
Since $\pi$ is holomorphic,
\[
 \kappa_Y(\pi(q);d\pi_qX)
 \le\rho^{-1}\kappa_D(q;X).
\]
Integrating along $\gamma$,
\[
 k_Y(\pi(x),\pi(y))
 \le\rho^{-1}\ell_{\kappa_D}(\gamma).
\]
Now let $\ell_{\kappa_D}(\gamma)\downarrow k_D(x,y)$ and then
$\rho\uparrow\tanh(d_0-M)$.
\end{proof}

The same disc restriction gives the infinitesimal form that we shall use later.

\begin{lemma}[Infinitesimal localization]\label{lem:inf-localization}
Let $D,U,Y$ and $\pi:U\to Y$ be as in Lemma~\ref{lem:localization}.  If $x\in U$ and
\[
 d_0=k_D(x,D\setminus U)>0,
\]
then for every $X\in T_xD$,
\begin{equation}\label{eq:inf-localization}
 \kappa_D(x;X)
 \ge
 \tanh(d_0)\,
 \kappa_Y(\pi(x);d\pi_xX).
\end{equation}
\end{lemma}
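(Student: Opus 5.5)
The argument mirrors the disc-restriction step in the proof of Lemma~\ref{lem:localization}, applied at the single point $x$ rather than along a curve. Since $d_0>0$, fix $0<\rho<\tanh(d_0)$. The case $X=0$ is trivial, so let $X\ne0$ and take any holomorphic disc $f:\D\to D$ with $f(0)=x$ and $f'(0)=tX$, $t>0$. We first check that $f(\D_\rho)\subset U$. Suppose instead that $f(\zeta)\notin U$ for some $|\zeta|<\rho$. Holomorphic contraction would then give
\[
 d_0=k_D(x,D\setminus U)\le k_D(x,f(\zeta))\le k_\D(0,\zeta)=\arctanh|\zeta|<\arctanh\rho<d_0,
\]
which is a contradiction. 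The only difference from Lemma~\ref{lem:localization} is that no curve length $M$ is subtracted, because we work at $x$ itself.

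Next define $g(\eta)=f(\rho\eta)$ for $\eta\in\D$. This is a holomorphic disc in $U$ with $g(0)=x$ and $g'(0)=\rho tX$. From the definition of the Kobayashi--Royden metric as an infimum over discs we get $\kappa_U(x;X)\le (\rho t)^{-1}$. Taking the infimum over admissible $f$ gives
\[
 \kappa_U(x;X)\le\rho^{-1}\kappa_D(x;X).
\]
Since $\pi:U\to Y$ is holomorphic, the infinitesimal contraction property gives $\kappa_Y(\pi(x);d\pi_xX)\le\kappa_U(x;X)$. Combining the two inequalities, we obtain $\rho\,\kappa_Y(\pi(x);d\pi_xX)\le\kappa_D(x;X)$.

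Finally let $\rho\uparrow\tanh(d_0)$ to obtain \eqref{eq:inf-localization}. One small point needs care: if $\kappa_D(x;X)$ is not realized by any disc, the infimum argument above still works, because the bound holds for every competitor disc. There is no real obstacle here; the only step requiring attention is the containment $f(\D_\rho)\subset U$, and it follows directly from contraction together with the normalization \eqref{eq:poincare}.
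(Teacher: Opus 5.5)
Your proposal is correct and follows the paper's proof essentially step for step. You fix $\rho<\tanh d_0$, use contraction to show that every disc centered at $x$ maps $\D_\rho$ into $U$, and rescale to get $\kappa_U(x;X)\le\rho^{-1}\kappa_D(x;X)$; you then apply contraction under $\pi$ and let $\rho\uparrow\tanh d_0$.
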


\begin{proof}
Fix $0<\rho<\tanh d_0$.  If $f:\D\to D$ is a holomorphic disc with $f(0)=x$, then $f(\D_\rho)\subset U$; otherwise contraction would give a point of $D\setminus U$ at Kobayashi distance smaller than $d_0$ from $x$.  Rescaling the disc gives
\[
 \kappa_U(x;X)\le\rho^{-1}\kappa_D(x;X).
\]
Since $\pi$ is holomorphic,
\[
 \kappa_Y(\pi(x);d\pi_xX)\le\kappa_U(x;X).
\]
Thus
\[
 \kappa_D(x;X)\ge\rho\,\kappa_Y(\pi(x);d\pi_xX).
\]
Let $\rho\uparrow\tanh d_0$.
\end{proof}

\section{The hyperbolic cubic and the product maps}\label{sec:cubic}

Let
\begin{equation}\label{eq:S}
 S=\C\setminus\{0,1\}
\end{equation}
and fix once and for all a universal covering
\begin{equation}\label{eq:tau}
 \tau:\D\longrightarrow S.
\end{equation}
We will use only two facts about $\tau$: it is holomorphic and onto, and as a covering of hyperbolic Riemann surfaces it is a local isometry for the Poincar\'e metrics.

Since $\tau(\tau-1)$ never vanishes on the simply connected disc, choose a holomorphic cube root
\begin{equation}\label{eq:v}
 v^3=\frac1{\tau(\tau-1)}
\end{equation}
and put
\begin{equation}\label{eq:u}
 u=\tau v.
\end{equation}
Then
\begin{equation}\label{eq:cubic-param}
 uv(u-v)=1,
 \qquad
 \frac uv=\tau.
\end{equation}
Thus $(u,v)$ parametrizes the affine cubic \eqref{eq:cubic} on its universal cover, and the ratio coordinate is exactly the hyperbolic surface $S$.

For $a>0$ small define
\begin{equation}\label{eq:parameters}
 c_a=a^3,
 \qquad
 s_a=\sqrt{1-a^6},
 \qquad
 b_a=\sqrt{1-\frac{a^7}{1-a^6}},
\end{equation}
and
\begin{equation}\label{eq:Wa}
 W_a(\eta)
 =s_a\frac{b_a+\eta}{1+b_a\eta}.
\end{equation}
Since $s_a^2(1-b_a^2)=a^7$, the disc-automorphism identity gives
\begin{equation}\label{eq:Wa-identity}
 1-a^6-|W_a(\eta)|^2
 =a^7\frac{1-|\eta|^2}{|1+b_a\eta|^2}.
\end{equation}
In particular,
\begin{equation}\label{eq:wa0}
 W_a(0)=\sqrt{1-a^6-a^7}.
\end{equation}

Define
\begin{equation}\label{eq:Phi}
 \Phi_a(\zeta,\eta)
 =\bigl(au(\zeta),av(\zeta),W_a(\eta)\bigr).
\end{equation}
By \eqref{eq:cubic-param},
\[
 P(au(\zeta),av(\zeta))=a^3,
\]
so
\begin{equation}\label{eq:rPhi}
 r(\Phi_a(\zeta,\eta))
 =a^8\bigl(|u(\zeta)|^8+|v(\zeta)|^8\bigr)
 -a^7\frac{1-|\eta|^2}{|1+b_a\eta|^2}.
\end{equation}

\begin{lemma}[Product maps on fixed bidiscs]\label{lem:product-map}
Fix $0<\rho<1$.  For all sufficiently small $a>0$,
\begin{equation}\label{eq:product-inclusion}
 \Phi_a(\D_\rho\times\D_\rho)\subset\Omega.
\end{equation}
Consequently, for $\zeta,\zeta',\eta,\eta'\in\D_\rho$,
\begin{equation}\label{eq:product-upper}
 k_\Omega\bigl(\Phi_a(\zeta,\eta),\Phi_a(\zeta',\eta')\bigr)
 \le
 \max\{k_{\D_\rho}(\zeta,\zeta'),k_{\D_\rho}(\eta,\eta')\}.
\end{equation}
\end{lemma}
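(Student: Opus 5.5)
The inclusion \eqref{eq:product-inclusion} comes straight from the identity \eqref{eq:rPhi}. We bound the positive term from above and the negative term from below, uniformly on $\D_\rho\times\D_\rho$, and then compare their orders in $a$.

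\emph{Positive term.} Since $\overline{\D_\rho}$ is a compact subset of $\D$ and $u,v$ are holomorphic on $\D$, there is a constant
\[
 C_\rho=\max_{|\zeta|\le\rho}\bigl(|u(\zeta)|^8+|v(\zeta)|^8\bigr)<\infty,
\]
and it does not depend on $a$. Hence the positive term in \eqref{eq:rPhi} is at most $C_\rho a^8$.

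\emph{Negative term.} For $|\eta|<\rho$ and $0<b_a<1$ we have $|1+b_a\eta|\le 1+\rho<2$. Therefore
\[
 a^7\frac{1-|\eta|^2}{|1+b_a\eta|^2}\ge a^7\frac{1-\rho^2}{4}.
\]

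\emph{Comparison.} Combining the two bounds gives
\[
 r(\Phi_a(\zeta,\eta))\le a^7\Bigl(C_\rho a-\tfrac{1-\rho^2}{4}\Bigr).
\]
The right-hand side is negative once $a<(1-\rho^2)/(4C_\rho)$. For such $a$ we have $r<0$, so $\Phi_a(\zeta,\eta)\in\Omega$. We should also check that $W_a$ is defined: $W_a$ is $s_a$ times a disc automorphism, so $|W_a|<1$ on $\D$, and its pole $-1/b_a$ lies outside $\overline{\D}$. The only point needing care is that $C_\rho$ is independent of $a$; this holds because $u,v$ are fixed functions.

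\emph{Distance bound.} The restriction $\Phi_a:\D_\rho\times\D_\rho\to\Omega$ is holomorphic. By the contraction property \eqref{eq:contraction},
\[
 k_\Omega\bigl(\Phi_a(\zeta,\eta),\Phi_a(\zeta',\eta')\bigr)\le k_{\D_\rho\times\D_\rho}\bigl((\zeta,\eta),(\zeta',\eta')\bigr).
\]
The Kobayashi distance of a product of discs is the maximum of the factor distances. Indeed, after rescaling we may take the maximum to be $k_{\D_\rho}(\zeta,\zeta')$, attained by the geodesic disc $h:\D\to\D_\rho$. We can then choose a holomorphic self-map of $\D_\rho$ sending the corresponding preimages to $\eta,\eta'$, which is possible by Schwarz–Pick because $k_{\D_\rho}(\eta,\eta')$ is no larger. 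Together these give a single analytic disc through both points whose Poincaré length is the maximum. This proves \eqref{eq:product-upper}.
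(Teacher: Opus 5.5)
Your proof is correct and follows the paper's own argument. You bound the $a^8$ term by $a^8C_\rho$ on $\overline{\D_\rho}$ and the $a^7$ term from below uniformly (your constant $(1-\rho^2)/4$ replaces the paper's $(1-\rho)/(1+\rho)$; either suffices), then combine holomorphic contraction with $k_{\D_\rho\times\D_\rho}=\max\{k_{\D_\rho},k_{\D_\rho}\}$. The paper simply takes that product formula as standard, and your sketch of it is fine, with one small correction: the two-point interpolating map exists by the explicit map $z\mapsto (t/s)z$ after normalizing with automorphisms, whereas Schwarz--Pick only gives the necessity of the distance condition.
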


\begin{proof}
On $|\eta|\le\rho$,
\[
 \frac{1-|\eta|^2}{|1+b_a\eta|^2}
 \ge\frac{1-\rho}{1+\rho}.
\]
Also
\[
 M_\rho:=\max_{|\zeta|\le\rho}
 \bigl(|u(\zeta)|^8+|v(\zeta)|^8\bigr)<\infty.
\]
Thus \eqref{eq:rPhi} is negative as soon as
\[
 aM_\rho<\frac{1-\rho}{1+\rho}.
\]
This proves the inclusion.  The distance estimate is holomorphic contraction together with
\[
 k_{\D_\rho\times\D_\rho}
 =\max\{k_{\D_\rho},k_{\D_\rho}\}.
\]
\end{proof}

The useful form of the upper bound is obtained by letting the parameter radius tend to one.

\begin{corollary}\label{cor:asymptotic-upper}
For fixed $x=(\zeta,\eta)$ and $y=(\zeta',\eta')$ in $\D^2$,
\begin{equation}\label{eq:asymptotic-upper}
 \limsup_{a\to0}
 k_\Omega(\Phi_a(x),\Phi_a(y))
 \le
 k_{\D^2}(x,y)
 =\max\{k_\D(\zeta,\zeta'),k_\D(\eta,\eta')\}.
\end{equation}
\end{corollary}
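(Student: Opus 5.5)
The plan is to apply Lemma~\ref{lem:product-map} with a radius $\rho$ close to one, and then let $\rho\uparrow1$. Fix $x=(\zeta,\eta)$ and $y=(\zeta',\eta')$ in $\D^2$. Choose any $\rho$ with $\max\{|\zeta|,|\zeta'|,|\eta|,|\eta'|\}<\rho<1$. By Lemma~\ref{lem:product-map} there is $a_\rho>0$ such that for all $0<a<a_\rho$ the inclusion \eqref{eq:product-inclusion} holds, and therefore \eqref{eq:product-upper} holds. The right-hand side of \eqref{eq:product-upper} does not depend on $a$, so
\[
 \limsup_{a\to0}k_\Omega(\Phi_a(x),\Phi_a(y))
 \le\max\{k_{\D_\rho}(\zeta,\zeta'),k_{\D_\rho}(\eta,\eta')\}.
\]

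Next we let $\rho\uparrow1$. The dilation $\zeta\mapsto\zeta/\rho$ is a biholomorphism of $\D_\rho$ onto $\D$, so
\[
 k_{\D_\rho}(\zeta,\zeta')=k_\D(\zeta/\rho,\zeta'/\rho).
\]
The Poincar\'e distance $k_\D$ is continuous on $\D\times\D$, and $\zeta/\rho\to\zeta$, $\zeta'/\rho\to\zeta'$ inside $\D$. Hence this quantity tends to $k_\D(\zeta,\zeta')$ as $\rho\uparrow1$, and the same holds in the $\eta$ variable. The left-hand side of the displayed inequality is independent of $\rho$, so passing to the limit gives \eqref{eq:asymptotic-upper}.

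The final equality in \eqref{eq:asymptotic-upper} is the standard product formula for the Kobayashi distance of a product of discs, which was already used in the proof of Lemma~\ref{lem:product-map}. There is no real obstacle in this argument. The only point needing care is the order of limits: we first take $a\to0$ with $\rho$ fixed, since $a_\rho$ may shrink as $\rho\to1$, and only afterwards let $\rho\uparrow1$. This order is legitimate because the bound obtained for each fixed $\rho$ is uniform in $a<a_\rho$.
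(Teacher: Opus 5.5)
Your proposal is correct and follows the paper's own argument: apply Lemma~\ref{lem:product-map} on a fixed $\D_\rho$ containing the four parameter values, let $a\to0$, then let $\rho\uparrow1$. Your dilation argument for $k_{\D_\rho}\to k_\D$ and your comment on the order of limits just spell out what the paper leaves implicit.
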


\begin{proof}
Choose $\rho<1$ containing the four fixed parameter values.  Lemma~\ref{lem:product-map} gives the same estimate with $\D_\rho$ in place of $\D$.  Let $a\to0$ and then $\rho\uparrow1$.
\end{proof}

The order of quantifiers is the following: the parameter points are fixed first, and then $a$ is taken small.  No fixed $a$ is supposed to carry the whole bidisc.

\section{Lower bounds in the two directions}\label{sec:lowerbounds}

The product maps give the upper bounds.  We now prove lower bounds in each of the two directions.

\subsection{The \texorpdfstring{$w$}{w}-factor has an exact left inverse}

Define
\begin{equation}\label{eq:F}
 F:\Omega\longrightarrow\B^2,
 \qquad
 F(z_1,z_2,w)=\bigl(P(z_1,z_2),w\bigr).
\end{equation}
For $c=c_a=a^3$ and $s=s_a=\sqrt{1-c^2}$ put
\begin{equation}\label{eq:La}
 L_a(q,w)=\frac{sw}{1-cq},
 \qquad (q,w)\in\B^2.
\end{equation}
A direct calculation gives
\begin{equation}\label{eq:Lball}
 |1-cq|^2-(1-c^2)|w|^2
 =|q-c|^2+(1-c^2)(1-|q|^2-|w|^2)>0,
\end{equation}
so $L_a:\B^2\to\D$ is holomorphic.  Now set
\begin{equation}\label{eq:nua}
 \nu_a
 =\frac{L_a\circ F-b_a}{1-b_aL_a\circ F}:
 \Omega\longrightarrow\D.
\end{equation}
On the image of $\Phi_a$ one has $P=a^3=c$, and therefore
\[
 L_a(F(\Phi_a(\zeta,\eta)))
 =\frac{W_a(\eta)}{s_a}
 =\frac{b_a+\eta}{1+b_a\eta}.
\]
Hence
\begin{equation}\label{eq:leftinverse}
 \nu_a\circ\Phi_a(\zeta,\eta)=\eta.
\end{equation}
Thus, whenever the points belong to the image of the product map,
\begin{equation}\label{eq:w-lower}
 k_\Omega\bigl(\Phi_a(\zeta,\eta),\Phi_a(\zeta',\eta')\bigr)
 \ge k_\D(\eta,\eta').
\end{equation}
This gives the required lower bound in the $w$-direction.

\subsection{The divisor \texorpdfstring{$\{P=0\}$}{P=0} goes to infinity}

Set
\begin{equation}\label{eq:Z}
 Z=\Omega\cap\{P=0\},
 \qquad
 U=\Omega\setminus Z.
\end{equation}
On $U$ the ratio
\begin{equation}\label{eq:pi}
 \pi(z_1,z_2,w)=\frac{z_1}{z_2}
\end{equation}
is holomorphic and takes values in $S=\C\setminus\{0,1\}$.  By \eqref{eq:cubic-param},
\begin{equation}\label{eq:piPhi}
 \pi\circ\Phi_a(\zeta,\eta)=\tau(\zeta).
\end{equation}
The removal of $Z$ is necessary because on $Z$ the ratio either is undefined or takes one of the omitted values $0,1$.  We now estimate the Kobayashi distance to $Z$.

\begin{lemma}\label{lem:divisor-distance}
Whenever $\Phi_a(\zeta,0)\in\Omega$,
\begin{equation}\label{eq:divisor-distance}
 k_\Omega(\Phi_a(\zeta,0),Z)
 \ge
 \arctanh\frac1{\sqrt{1+a}}
 =\arsinh(a^{-1/2}).
\end{equation}
The bound is independent of $\zeta$.
\end{lemma}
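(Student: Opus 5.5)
The plan is to find one explicit bounded holomorphic function on $\Omega$ that vanishes identically on $Z$ and is very close to the unit circle at $\Phi_a(\zeta,0)$. Holomorphic contraction will then bound the distance to every point of $Z$ at once. The natural candidate comes from the map $F=(P,w):\Omega\to\B^2$ in \eqref{eq:F}, followed by an automorphism of $\B^2$. The key observation is that $F(Z)\subset\{q=0\}$. Also, at the point $x=\Phi_a(\zeta,0)$ we have, by \eqref{eq:cubic-param} and \eqref{eq:wa0},
\[
 F(x)=(a^3,t),\qquad t=W_a(0)=\sqrt{1-a^6-a^7}.
\]
This value does not depend on $\zeta$, which is the source of the uniformity in $\zeta$.

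Consider the automorphism of $\B^2$ moving $(0,t)$ to the origin,
\[
 (q,w)\longmapsto\Bigl(\frac{\sqrt{1-t^2}\,q}{1-tw},\ \frac{w-t}{1-tw}\Bigr).
\]
Its first component, which I call $\psi$, maps $\B^2$ into $\D$. One can verify this directly by an identity like \eqref{eq:Lball}, or simply because it is a coordinate of a ball automorphism. Moreover $\psi$ vanishes on $\{q=0\}$. Set $h=\psi\circ F\in\OO(\Omega,\D)$. Then $h\equiv0$ on $Z$, and
\[
 h(x)=\frac{\sqrt{1-t^2}\,a^3}{1-t^2}=\frac{a^3}{\sqrt{a^6+a^7}}=\frac1{\sqrt{1+a}}.
\]

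Now take any $z\in Z$. Holomorphic contraction \eqref{eq:contraction} gives
\[
 k_\Omega(x,z)\ge k_\D(h(x),h(z))=k_\D\bigl(1/\sqrt{1+a},0\bigr)=\arctanh\frac1{\sqrt{1+a}}.
\]
Taking the infimum over $z\in Z$ yields \eqref{eq:divisor-distance}. For the alternative form, put $\tanh y=(1+a)^{-1/2}$. Then
\[
 \sinh y=\frac{\tanh y}{\sqrt{1-\tanh^2y}}=\frac{(1+a)^{-1/2}}{\sqrt{a/(1+a)}}=a^{-1/2},
\]
so $y=\arsinh(a^{-1/2})$.

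There is no real obstacle here. The only delicate point is choosing $\psi$ so that it kills $\{q=0\}$ while being normalized at $(0,t)$ rather than at $F(x)$ itself. That normalization is what makes $|h(x)|$ close to $1$, namely $1/\sqrt{1+a}$. A naive choice such as $q$ alone would only give $|h(x)|=a^3$.
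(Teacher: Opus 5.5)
Your proposal is correct and follows the paper's proof essentially verbatim: your $\psi(q,w)=\sqrt{1-t^2}\,q/(1-tw)$ is exactly the paper's $H_W$ with $W=t=W_a(0)$. Applying it to $F=(P,w)$, you use that it vanishes on $F(Z)$ and equals $1/\sqrt{1+a}$ at $F(\Phi_a(\zeta,0))=(a^3,W)$, then conclude by holomorphic contraction. Your explicit check of $\arctanh(1+a)^{-1/2}=\arsinh(a^{-1/2})$ fills in a step the paper only states.
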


\begin{proof}
Put
\[
 W=W_a(0)=\sqrt{1-a^6-a^7}.
\]
Since $P(au(\zeta),av(\zeta))=a^3$,
\[
 F(\Phi_a(\zeta,0))=(a^3,W)
\]
for every $\zeta$.  Define
\begin{equation}\label{eq:HW}
 H_W(q,w)=\frac{\sqrt{1-W^2}\,q}{1-Ww},
 \qquad (q,w)\in\B^2.
\end{equation}
The same identity as in \eqref{eq:Lball} shows that $H_W:\B^2\to\D$ is holomorphic.  If $z\in Z$, then $H_W(F(z))=0$.  At the point $\Phi_a(\zeta,0)$,
\[
 1-W^2=a^6+a^7=a^6(1+a),
\]
and hence
\[
 H_W(F(\Phi_a(\zeta,0)))
 =\frac{\sqrt{a^6+a^7}\,a^3}{a^6+a^7}
 =\frac1{\sqrt{1+a}}.
\]
Holomorphic contraction therefore gives
\[
 k_\Omega(\Phi_a(\zeta,0),z)
 \ge
 \arctanh\frac1{\sqrt{1+a}}.
\]
Take the infimum over $z\in Z$.  Finally,
\[
 \arctanh\frac1{\sqrt{1+a}}=\arsinh(a^{-1/2}).
\]
\end{proof}

Thus the set removed in order to obtain the $S$-valued ratio map tends to infinite Kobayashi distance.  The localization lemma now gives the second lower bound.

\begin{proposition}[Lower bound in the cubic direction]\label{prop:ratio-lower}
For fixed $\zeta,\zeta'\in\D$,
\begin{equation}\label{eq:ratio-lower}
 \liminf_{a\to0}
 k_\Omega\bigl(\Phi_a(\zeta,0),\Phi_a(\zeta',0)\bigr)
 \ge
 k_S(\tau(\zeta),\tau(\zeta')).
\end{equation}
\end{proposition}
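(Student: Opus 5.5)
We apply Lemma~\ref{lem:localization} with $D=\Omega$, $U=\Omega\setminus Z$, $Y=S$ and $\pi=z_1/z_2$, and feed it the divisor-distance estimate of Lemma~\ref{lem:divisor-distance}. Fix $\zeta,\zeta'\in\D$ and choose $\rho<1$ with $|\zeta|,|\zeta'|<\rho$. Put $x_a=\Phi_a(\zeta,0)$ and $y_a=\Phi_a(\zeta',0)$. By Lemma~\ref{lem:product-map}, for all small $a$ both points lie in $\Omega$. Since $P=a^3\neq0$ on the image of $\Phi_a$, they also lie in $U$. Moreover
\[
 k_\Omega(x_a,y_a)\le k_{\D_\rho}(\zeta,\zeta')=:K_\rho<\infty,
\]
a bound independent of $a$. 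Lemma~\ref{lem:divisor-distance} gives
\[
 d_0(a):=k_\Omega(x_a,Z)\ge \arsinh(a^{-1/2})\longrightarrow\infty .
\]

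Take $M=K_\rho+1$. For small $a$ we then have $k_\Omega(x_a,y_a)<M<d_0(a)$, so the hypotheses of Lemma~\ref{lem:localization} hold. Here $Y=S$ is Kobayashi hyperbolic, and $\pi$ is holomorphic on $U$ with values in $S$. By \eqref{eq:piPhi}, $\pi(x_a)=\tau(\zeta)$ and $\pi(y_a)=\tau(\zeta')$, independently of $a$. Inequality \eqref{eq:localization-lower} yields
\[
 k_\Omega(x_a,y_a)\ge \tanh\bigl(d_0(a)-M\bigr)\,k_S(\tau(\zeta),\tau(\zeta')).
\]
As $a\to0$ we have $d_0(a)-M\to\infty$, so the factor $\tanh(d_0(a)-M)\to1$. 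Taking $\liminf$ gives \eqref{eq:ratio-lower}.

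No step is really hard. The point needing care is the order of quantifiers: the upper bound $M$ must be uniform in $a$ before we let $d_0(a)\to\infty$. Corollary~\ref{cor:asymptotic-upper} alone would only give a $\limsup$, which still suffices, but the explicit $\rho$-bound from Lemma~\ref{lem:product-map} is cleaner. One should also check that Lemma~\ref{lem:localization} uses $d_0$ measured from $x$ to $D\setminus U=Z$, which is exactly the quantity estimated in Lemma~\ref{lem:divisor-distance}.
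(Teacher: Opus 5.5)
Your proposal is correct and follows the paper's proof: Royden localization (Lemma~\ref{lem:localization}) on $U=\Omega\setminus Z$ with $\pi=z_1/z_2$, combined with the divisor-distance bound of Lemma~\ref{lem:divisor-distance}, so that $\tanh(d_0(a)-M)\to1$. The only cosmetic difference is that you take the uniform bound $M=k_{\D_\rho}(\zeta,\zeta')+1$ directly from Lemma~\ref{lem:product-map}, whereas the paper picks any $M>k_\D(\zeta,\zeta')$ and invokes Corollary~\ref{cor:asymptotic-upper}; both give the needed $a$-independent bound.
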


\begin{proof}
Choose
\[
 M>k_\D(\zeta,\zeta').
\]
By Corollary~\ref{cor:asymptotic-upper}, for all small $a$,
\[
 k_\Omega(\Phi_a(\zeta,0),\Phi_a(\zeta',0))<M.
\]
On the other hand Lemma~\ref{lem:divisor-distance} gives
\[
 d_a:=k_\Omega(\Phi_a(\zeta,0),Z)
 \ge\arsinh(a^{-1/2})\longrightarrow\infty.
\]
Apply Lemma~\ref{lem:localization} to
\[
 D=\Omega,
 \qquad U=\Omega\setminus Z,
 \qquad \pi=z_1/z_2.
\]
Using \eqref{eq:piPhi},
\[
 k_\Omega\bigl(\Phi_a(\zeta,0),\Phi_a(\zeta',0)\bigr)
 \ge
 \tanh(d_a-M)\,
 k_S(\tau(\zeta),\tau(\zeta')).
\]
Let $a\to0$.
\end{proof}

\section{The Carath\'eodory metric in the cubic direction}\label{sec:caratheodory}

The preceding proposition gives a nonzero Kobayashi lower bound in the cubic direction.  The Carath\'eodory metric behaves differently: bounded holomorphic functions become asymptotically constant on larger and larger subsets of the cubic.  We first prove this statement.

Let
\[
 \Gamma=\{(u,v)\in\C^2:uv(u-v)=1\}.
\]
Its projective closure is
\begin{equation}\label{eq:projective-cubic}
 \overline\Gamma=\{[U:V:W]\in\mathbb P^2:UV(U-V)=W^3\}.
\end{equation}
This is a smooth cubic.  Indeed, in the affine coordinates $W=1$ a singular point would have to satisfy
\[
 V(2U-V)=0,
 \qquad
 U(U-2V)=0,
 \qquad
 UV(U-V)=1,
\]
which is impossible.  At infinity there are exactly three points,
\[
 [0:1:0],\qquad[1:0:0],\qquad[1:1:0],
\]
and the gradient of $UV(U-V)-W^3$ is nonzero at each of them.  Thus $\Gamma$ is obtained from the compact Riemann surface $\overline{\Gamma}$ by removing these three points.  Every bounded holomorphic function on $\Gamma$ extends across the punctures and is therefore constant.

Define
\begin{equation}\label{eq:iota}
 \iota_a:\Gamma\longrightarrow\C^3,
 \qquad
 \iota_a(u,v)=\bigl(au,av,W_a(0)\bigr).
\end{equation}
On $\Gamma$ we have $P(au,av)=a^3$, and therefore
\begin{equation}\label{eq:r-iota}
 r\circ\iota_a
 =a^8(|u|^8+|v|^8)-a^7.
\end{equation}
Thus
\begin{equation}\label{eq:Gamma-a}
 \Gamma_a
 =\{(u,v)\in\Gamma:|u|^8+|v|^8<a^{-1}\}
\end{equation}
embeds holomorphically into $\Omega$, and $\Gamma_a\nearrow\Gamma$ as $a\to0$.

\begin{lemma}[Asymptotic constancy on the cubic]\label{lem:bounded-cubic}
Fix $q\in\Gamma$ and a compact set $K\Subset\Gamma$.  Then
\begin{equation}\label{eq:bounded-cubic}
 \sup\left\{
 \sup_{x\in K}|f(\iota_a(x))|:
 f\in\OO(\Omega,\D),\ f(\iota_a(q))=0
 \right\}\longrightarrow0
\end{equation}
as $a\to0$.
\end{lemma}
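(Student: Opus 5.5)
We argue by contradiction together with a normal families argument. If \eqref{eq:bounded-cubic} fails, there are $\varepsilon>0$, a sequence $a_j\to0$, functions $f_j\in\OO(\Omega,\D)$ with $f_j(\iota_{a_j}(q))=0$, and points $x_j\in K$ with $|f_j(\iota_{a_j}(x_j))|\ge\varepsilon$. After passing to a subsequence we may assume $x_j\to x_\infty\in K$. Put
\[
 g_j=f_j\circ\iota_{a_j}.
\]
This function is defined and holomorphic on $\Gamma_{a_j}$, takes values in $\D$, and satisfies $g_j(q)=0$. The pullback is legitimate because $\iota_{a_j}$ maps $\Gamma_{a_j}$ holomorphically into $\Omega$ by \eqref{eq:r-iota}.

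The sets $\Gamma_a$ increase and exhaust $\Gamma$ as $a\to0$. So every compact $L\Subset\Gamma$ lies in $\Gamma_{a_j}$ for all large $j$, because $|u|^8+|v|^8$ is bounded on $L$ while $a_j^{-1}\to\infty$. The family $\{g_j\}$ is therefore eventually defined on each compact subset of the Riemann surface $\Gamma$ and is uniformly bounded by $1$. By Montel's theorem applied in local charts, together with a diagonal argument over an exhaustion of $\Gamma$ by compacts, a subsequence converges locally uniformly on $\Gamma$ to a holomorphic function $g:\Gamma\to\overline{\D}$.

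This limit $g$ is bounded and holomorphic on $\Gamma$. By the discussion preceding the lemma, $\Gamma$ is the smooth projective cubic $\overline\Gamma$ with three points removed, so $g$ extends across the punctures by Riemann's removable singularity theorem and is constant. Since $g(q)=\lim g_j(q)=0$, we get $g\equiv0$. On the other hand, local uniform convergence near $x_\infty$ together with $x_j\to x_\infty$ gives $g_j(x_j)\to g(x_\infty)=0$. This contradicts $|g_j(x_j)|\ge\varepsilon$.

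The only point needing care is the exhaustion step: a fixed compact set must sit inside $\Gamma_{a_j}$ for all large $j$. It is also worth checking that connectedness of $\Gamma$ is not needed. A constant function is forced because $\overline\Gamma$ is a compact connected Riemann surface, being a smooth plane curve. Hence the removed-point surface $\Gamma$ is connected as well, and $g\equiv0$ holds on all of it, not merely on the component of $q$. With these two points checked, the argument is routine.
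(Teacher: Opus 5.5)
Your proposal is correct and follows essentially the same route as the paper's proof: contradiction, pullback $g_j=f_j\circ\iota_{a_j}$ on the exhaustion $\Gamma_{a_j}$, diagonal Montel, and constancy of bounded holomorphic functions on the smooth cubic with three points removed. Your explicit checks of the exhaustion step and of the connectedness of $\Gamma$ (via the irreducibility of a smooth plane curve) fill in details the paper leaves implicit.
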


\begin{proof}
Suppose not.  Then for some $\varepsilon>0$ there are $a_j\to0$, maps $f_j\in\OO(\Omega,\D)$ with $f_j(\iota_{a_j}(q))=0$, and points $x_j\in K$ such that
\[
 |f_j(\iota_{a_j}(x_j))|\ge\varepsilon.
\]
After passing to a subsequence, $x_j\to x\in K$.  The functions
\[
 g_j=f_j\circ\iota_{a_j}
\]
are defined on an exhaustion of $\Gamma$ and satisfy $|g_j|\le1$.  A diagonal Montel argument gives a subsequence converging locally uniformly on $\Gamma$ to a bounded holomorphic function $g$.  Since $g(q)=0$ and every bounded holomorphic function on $\Gamma$ is constant, $g\equiv0$.  Local uniform convergence near $x$ contradicts $|g_j(x_j)|\ge\varepsilon$.
\end{proof}

The corresponding statement for the Carath\'eodory distance follows at once.

\begin{proposition}[Carath\'eodory distance along the cubic]\label{prop:caratheodory-distance}
For fixed $q,q'\in\Gamma$,
\begin{equation}\label{eq:caratheodory-general-zero}
 c_\Omega(\iota_a(q),\iota_a(q'))\longrightarrow0.
\end{equation}
In particular, for fixed $\zeta,\zeta'\in\D$,
\begin{equation}\label{eq:caratheodory-zero}
 c_\Omega\bigl(\Phi_a(\zeta,0),\Phi_a(\zeta',0)\bigr)\longrightarrow0.
\end{equation}
\end{proposition}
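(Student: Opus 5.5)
The plan is to apply Lemma~\ref{lem:bounded-cubic} with the compact set $K=\{q'\}$. By the definition \eqref{eq:caratheodory-distance}, $c_\Omega(\iota_a(q),\iota_a(q'))=\sup_f k_\D(f(\iota_a(q)),f(\iota_a(q')))$ over $f\in\OO(\Omega,\D)$. Composing $f$ with a disc automorphism does not change the pair distance $k_\D(f(\iota_a(q)),f(\iota_a(q')))$. So we may restrict to those $f$ with $f(\iota_a(q))=0$, and the supremum becomes $\sup_f \arctanh|f(\iota_a(q'))|$ over that normalized class. Lemma~\ref{lem:bounded-cubic} applied with $K=\{q'\}$ says that $\sup|f(\iota_a(q'))|\to0$. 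Since $\arctanh$ is continuous and increasing with $\arctanh 0=0$, the Carathéodory distance tends to $0$.

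One point must be checked first: $\iota_a(q)$ and $\iota_a(q')$ must lie in $\Omega$. By \eqref{eq:Gamma-a}, this holds as soon as $a(|u|^8+|v|^8)<1$ at $q$ and at $q'$, which is true for all small $a$ because $q,q'$ are fixed.

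For the second statement, \eqref{eq:caratheodory-zero}, put $q=(u(\zeta),v(\zeta))$ and $q'=(u(\zeta'),v(\zeta'))$. By \eqref{eq:cubic-param} these points lie on $\Gamma$. By \eqref{eq:Phi} and \eqref{eq:iota}, $\Phi_a(\zeta,0)=\iota_a(q)$, since both equal $(au(\zeta),av(\zeta),W_a(0))$. Likewise $\Phi_a(\zeta',0)=\iota_a(q')$. Note that $u$ and $v$ do not depend on $a$, so $q$ and $q'$ are fixed as $a$ varies. The first part of the proposition therefore applies directly.

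There is no real obstacle here; the analytic work, namely the Montel argument and the Liouville property of the punctured cubic, is already contained in Lemma~\ref{lem:bounded-cubic}. The only care needed is the reduction to $f(\iota_a(q))=0$. For that, compose $f$ with the Möbius automorphism $m(\xi)=(\xi-f(\iota_a(q)))/(1-\overline{f(\iota_a(q))}\,\xi)$. This is an isometry of $k_\D$ that sends $f(\iota_a(q))$ to $0$, and the composite $m\circ f$ still lies in $\OO(\Omega,\D)$.
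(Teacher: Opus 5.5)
Your proposal is correct and follows the paper's own argument: normalize $f$ by a disc automorphism so that $f(\iota_a(q))=0$, apply Lemma~\ref{lem:bounded-cubic} to get the uniform decay of $|f(\iota_a(q'))|$, and obtain the special case from the identity $\Phi_a(\zeta,0)=\iota_a(u(\zeta),v(\zeta))$. The paper takes $K=\{q,q'\}$ where you take $K=\{q'\}$, which is equally valid. You also spell out two points the paper leaves implicit: that both points lie in $\Omega$ for small $a$, and the monotonicity of $\arctanh$.
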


\begin{proof}
Given $f\in\OO(\Omega,\D)$, compose it with the automorphism of $\D$ sending $f(\iota_a(q))$ to $0$.  Lemma~\ref{lem:bounded-cubic}, with $K=\{q,q'\}$, shows uniformly in $f$ that the normalized value at $q'$ tends to zero.  By the definition of the Carath\'eodory distance this is exactly \eqref{eq:caratheodory-general-zero}.  For \eqref{eq:caratheodory-zero}, take
\[
 q=(u(\zeta),v(\zeta)),
 \qquad
 q'=(u(\zeta'),v(\zeta')).
\]
\end{proof}

We now pass to infinitesimal metrics.  Fix $\zeta\in\D$ and set
\begin{equation}\label{eq:x-X-a}
 x_a=\Phi_a(\zeta,0),
 \qquad
 X_a=\partial_\zeta\Phi_a(\zeta,0)
 =a\bigl(u'(\zeta),v'(\zeta),0\bigr).
\end{equation}
The vector $X_a$ is nonzero: otherwise $\tau'=(u/v)'$ would vanish at $\zeta$, which is impossible for a covering map.

\begin{proposition}[Infinitesimal non-comparability]\label{prop:inf-separation}
For every fixed $\zeta\in\D$,
\begin{equation}\label{eq:gamma-zero}
 \gamma_\Omega(x_a;X_a)\longrightarrow0,
\end{equation}
and
\begin{equation}\label{eq:kappa-limit}
 \kappa_\Omega(x_a;X_a)
 \longrightarrow
 \kappa_\D(\zeta;1)>0.
\end{equation}
Consequently
\begin{equation}\label{eq:metric-ratio-zero}
 \frac{\gamma_\Omega(x_a;X_a)}{\kappa_\Omega(x_a;X_a)}\longrightarrow0.
\end{equation}
\end{proposition}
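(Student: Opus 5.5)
The plan has three parts: the Carath\'eodory limit \eqref{eq:gamma-zero}, the upper half of the Kobayashi limit \eqref{eq:kappa-limit}, and its lower half; the ratio statement \eqref{eq:metric-ratio-zero} then follows immediately.

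\emph{Carath\'eodory part.} By \eqref{eq:caratheodory-metric} it suffices to bound $|df_{x_a}X_a|$ for $f\in\OO(\Omega,\D)$ normalized so that $f(x_a)=0$. Composing with a disc automorphism does not change the quotient $|df_zX|/(1-|f(z)|^2)$, so this normalization is harmless. Write $q=(u(\zeta),v(\zeta))\in\Gamma$, so that $x_a=\iota_a(q)$ and
\[
 df_{x_a}X_a=\frac{d}{dt}\Big|_{t=\zeta}\,f\bigl(\iota_a(u(t),v(t))\bigr).
\]
Choose $r>0$ so small that $\overline{\D(\zeta,r)}\subset\D$, and let $K$ be the compact image of this closed disc under $t\mapsto(u(t),v(t))$. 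Lemma~\ref{lem:bounded-cubic} says that $\sup_K|f\circ\iota_a|\le\varepsilon_a$, with $\varepsilon_a\to0$ uniformly in the normalized $f$. The Cauchy estimate on $\D(\zeta,r)$ then gives $|df_{x_a}X_a|\le\varepsilon_a/r$. Hence $\gamma_\Omega(x_a;X_a)\le\varepsilon_a/r\to0$.

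\emph{Kobayashi upper bound.} Fix $\rho$ with $|\zeta|<\rho<1$. For small $a$, Lemma~\ref{lem:product-map} shows that the disc $t\mapsto\Phi_a(t,0)$ maps $\D_\rho$ into $\Omega$. Contraction gives
\[
 \kappa_\Omega(x_a;X_a)\le\kappa_{\D_\rho}(\zeta;1).
\]
Taking $\limsup_{a\to0}$ and then letting $\rho\uparrow1$ yields $\limsup\kappa_\Omega(x_a;X_a)\le\kappa_\D(\zeta;1)$.

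\emph{Kobayashi lower bound.} Here I apply Lemma~\ref{lem:inf-localization} with $D=\Omega$, $U=\Omega\setminus Z$, $Y=S$ and $\pi=z_1/z_2$. Lemma~\ref{lem:divisor-distance} gives $d_a=k_\Omega(x_a,Z)\ge\arsinh(a^{-1/2})$, so $\tanh d_a\to1$. By \eqref{eq:piPhi}, $d\pi_{x_a}X_a=\tau'(\zeta)$, and this does not depend on $a$. Since $\tau$ is a covering, it is a local isometry for the Kobayashi (Poincar\'e) metrics, so
\[
 \kappa_S(\tau(\zeta);\tau'(\zeta))=\kappa_\D(\zeta;1).
\]
Therefore $\kappa_\Omega(x_a;X_a)\ge\tanh(d_a)\,\kappa_\D(\zeta;1)\to\kappa_\D(\zeta;1)$. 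This step needs the fact that the Kobayashi--Royden metric of $S$ is the pushforward of that of its universal cover under $\tau$, which is standard for coverings. Combining it with the upper bound proves \eqref{eq:kappa-limit}, and the limit is positive. Dividing by the Carath\'eodory bound gives \eqref{eq:metric-ratio-zero}.

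The main obstacle is the Carath\'eodory step, namely uniformity in $f$. Lemma~\ref{lem:bounded-cubic} provides exactly this uniformity, but only on a fixed compact set $K\Subset\Gamma$. The point to get right is that $K$, and hence the Cauchy radius $r$, must be chosen independently of $a$. This works because the parametrization $t\mapsto(u(t),v(t))$ does not depend on $a$; the parameter $a$ enters only through $\iota_a$.
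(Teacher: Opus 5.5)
Your proposal is correct and follows essentially the same route as the paper. It bounds $\gamma_\Omega$ via Lemma~\ref{lem:bounded-cubic} plus a Cauchy estimate on an $a$-independent compact set, gets the Kobayashi upper bound from the product map on $\D_\rho^2$ followed by $\rho\uparrow1$, and gets the lower bound from infinitesimal localization away from $Z$ together with the covering isometry $\kappa_S(\tau(\zeta);\tau'(\zeta))=\kappa_\D(\zeta;1)$. The only cosmetic difference is that you run the Cauchy estimate in the parameter $t\in\D$ through $t\mapsto(u(t),v(t))$ rather than in a coordinate disc on $\Gamma$, which is equivalent.
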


\begin{proof}
Put
\[
 q=(u(\zeta),v(\zeta))\in\Gamma,
 \qquad
 \xi=(u'(\zeta),v'(\zeta))\in T_q\Gamma.
\]
Then $x_a=\iota_a(q)$ and $X_a=d(\iota_a)_q\xi$.

For the Carath\'eodory metric, normalize $f\in\OO(\Omega,\D)$ by a disc automorphism so that $f(x_a)=0$.  Lemma~\ref{lem:bounded-cubic} applied on a fixed compact coordinate disc in $\Gamma$ around $q$ says that $f\circ\iota_a\to0$ there uniformly, uniformly over all such normalized $f$.  Cauchy's estimate therefore gives
\[
 \sup_{f(x_a)=0}|d(f\circ\iota_a)_q\xi|\longrightarrow0.
\]
By \eqref{eq:caratheodory-metric}, this is \eqref{eq:gamma-zero}.

For the upper bound in \eqref{eq:kappa-limit}, fix $\rho<1$ with $|\zeta|<\rho$.  For small $a$, Lemma~\ref{lem:product-map} gives a holomorphic map
\[
 \Phi_a:\D_\rho^2\longrightarrow\Omega.
\]
Infinitesimal contraction at $(\zeta,0)$ in the vector $(1,0)$ yields
\[
 \kappa_\Omega(x_a;X_a)
 \le \kappa_{\D_\rho}(\zeta;1).
\]
First let $a\to0$, then $\rho\uparrow1$, to obtain
\begin{equation}\label{eq:kappa-limsup}
 \limsup_{a\to0}\kappa_\Omega(x_a;X_a)
 \le\kappa_\D(\zeta;1).
\end{equation}

For the lower bound, use $U=\Omega\setminus Z$ and $\pi=z_1/z_2$.  Lemma~\ref{lem:divisor-distance} gives
\[
 d_a:=k_\Omega(x_a,Z)\longrightarrow\infty.
\]
By Lemma~\ref{lem:inf-localization},
\[
 \kappa_\Omega(x_a;X_a)
 \ge
 \tanh(d_a)\,
 \kappa_S\bigl(\pi(x_a);d\pi_{x_a}X_a\bigr).
\]
Using $\pi\circ\Phi_a(\zeta,0)=\tau(\zeta)$,
\[
 d\pi_{x_a}X_a=\tau'(\zeta).
\]
The covering $\tau:\D\to S$ is a local Poincar\'e isometry, hence
\[
 \kappa_S(\tau(\zeta);\tau'(\zeta))
 =\kappa_\D(\zeta;1).
\]
Letting $a\to0$ gives the reverse liminf inequality.  Together with \eqref{eq:kappa-limsup}, this proves \eqref{eq:kappa-limit}.
\end{proof}

\begin{proof}[Proof of Theorem~\ref{thm:noncomparable}]
The infinitesimal assertion follows from Proposition~\ref{prop:inf-separation}; since $x_a\to p$, the failure occurs arbitrarily close to $p$.  If one wants Euclidean unit vectors, replace $X_a$ by $X_a/\|X_a\|$; the ratio in \eqref{eq:metric-ratio-zero} is unchanged by homogeneity.

For the distances, first choose fixed $\zeta,\zeta'\in\D$ with $\tau(\zeta)\ne\tau(\zeta')$.  Proposition~\ref{prop:ratio-lower} and Proposition~\ref{prop:caratheodory-distance} give
\[
 \liminf_{a\to0}
 k_\Omega\bigl(\Phi_a(\zeta,0),\Phi_a(\zeta',0)\bigr)
 \ge
 k_S(\tau(\zeta),\tau(\zeta'))>0,
\]
while the corresponding Carath\'eodory distance tends to zero.  Hence no uniform constant $C$ can satisfy $k_\Omega\le Cc_\Omega$.

We can obtain the stronger statement \eqref{eq:intro-distance-strong} by a diagonal choice.  For every integer $j\ge1$, choose $\zeta_j,\zeta'_j\in\D$ such that
\[
 k_S(\tau(\zeta_j),\tau(\zeta'_j))>2j;
\]
this is possible because $\tau$ is onto and $S$ has infinite Poincar\'e diameter.  Once this pair is fixed, Propositions~\ref{prop:ratio-lower} and \ref{prop:caratheodory-distance} allow us to choose $a_j>0$ so small that
\[
 k_\Omega\bigl(\Phi_{a_j}(\zeta_j,0),\Phi_{a_j}(\zeta'_j,0)\bigr)>j,
 \qquad
 c_\Omega\bigl(\Phi_{a_j}(\zeta_j,0),\Phi_{a_j}(\zeta'_j,0)\bigr)<\frac1j,
\]
and, at the same time, both points are at Euclidean distance less than $1/j$ from $p$.  With
\[
 x_j=\Phi_{a_j}(\zeta_j,0),
 \qquad
 y_j=\Phi_{a_j}(\zeta'_j,0),
\]
we obtain \eqref{eq:intro-distance-strong}, and hence also \eqref{eq:intro-distance-ratio}.
\end{proof}

The two arguments use the same cubic in different ways.  Global bounded holomorphic functions become asymptotically constant on it, while the $S$-valued ratio on $\Omega\setminus Z$ gives a Kobayashi lower bound because $Z$ is far away in the Kobayashi distance.

\section{The four-point contradiction}\label{sec:fourpoint}

We now prove non-hyperbolicity.

Choose arbitrary $\zeta_1,\zeta_2\in\D$.  Let $\zeta_0$ be the midpoint of a Poincar\'e geodesic joining them and put
\begin{equation}\label{eq:R-midpoint}
 R=\frac12k_\D(\zeta_1,\zeta_2),
 \qquad
 t=\tanh R.
\end{equation}
Thus
\[
 k_\D(\zeta_1,\zeta_0)
 =k_\D(\zeta_0,\zeta_2)
 =k_\D(0,t)=R.
\]
For small $a$ set
\begin{equation}\label{eq:fourpoints}
 \begin{aligned}
 A_a&=\Phi_a(\zeta_1,0),&\qquad B_a&=\Phi_a(\zeta_2,0),\\
 C_a&=\Phi_a(\zeta_0,-t),& D_a&=\Phi_a(\zeta_0,t).
 \end{aligned}
\end{equation}
Write
\begin{equation}\label{eq:N}
 N=k_S(\tau(\zeta_1),\tau(\zeta_2)).
\end{equation}
Proposition~\ref{prop:ratio-lower} gives
\begin{equation}\label{eq:AB-liminf}
 \liminf_{a\to0}k_\Omega(A_a,B_a)\ge N.
\end{equation}
The exact left inverse \eqref{eq:w-lower} gives
\begin{equation}\label{eq:CD-lower}
 k_\Omega(C_a,D_a)\ge k_\D(-t,t)=2R.
\end{equation}
Finally Corollary~\ref{cor:asymptotic-upper} gives, for every cross pair,
\begin{equation}\label{eq:cross-limsup}
 \limsup_{a\to0}k_\Omega(A_a,C_a),\quad
 \limsup_{a\to0}k_\Omega(A_a,D_a),\quad
 \limsup_{a\to0}k_\Omega(B_a,C_a),\quad
 \limsup_{a\to0}k_\Omega(B_a,D_a)
 \le R.
\end{equation}
Thus the two distinguished pairs $(A_a,B_a)$ and $(C_a,D_a)$ have lower bounds $N$ and $2R$, respectively, while each of the four cross-distances has upper limit at most $R$.

\begin{proof}[Proof of Theorem~\ref{thm:main}, non-hyperbolicity]
Assume that $(\Omega,k_\Omega)$ is $\delta$-hyperbolic in the four-point sense.  Applied to $A_a,B_a,C_a,D_a$, the four-point inequality gives
\begin{align*}
 k_\Omega(A_a,B_a)+k_\Omega(C_a,D_a)
 \le{}&
 \max\{k_\Omega(A_a,C_a)+k_\Omega(B_a,D_a),\\
 &\hspace{2.4cm}k_\Omega(A_a,D_a)+k_\Omega(B_a,C_a)\}+2\delta.
\end{align*}
Let $a\to0$.  From \eqref{eq:AB-liminf}--\eqref{eq:cross-limsup},
\[
 N+2R\le2R+2\delta,
\]
so
\begin{equation}\label{eq:S-bounded}
 k_S(\tau(\zeta_1),\tau(\zeta_2))\le2\delta
\end{equation}
for every $\zeta_1,\zeta_2\in\D$.

But $\tau$ is onto, while $S=\C\setminus\{0,1\}$ has infinite Poincar\'e diameter.  Indeed its Poincar\'e metric is complete, and a complete finite-dimensional Riemannian manifold of finite diameter is compact by Hopf--Rinow, whereas $S$ is not compact.  Thus \eqref{eq:S-bounded} is impossible.  Hence $(\Omega,k_\Omega)$ is not Gromov hyperbolic.
\end{proof}

For fixed $\zeta_1,\zeta_2$, all four points in \eqref{eq:fourpoints} tend to $p=(0,0,1)$ as $a\to0$: the first two coordinates are $O(a)$ and the third tends to $1$.  The failure of the four-point inequality therefore already occurs arbitrarily close to the single boundary point $p$.

\section{Arbitrarily fat geodesic triangles near \texorpdfstring{$p$}{p}}\label{sec:triangles}

The four-point argument is enough for non-hyperbolicity.  To prove the stronger local statement in Theorem~\ref{thm:main}, we use one elementary metric observation.

\begin{lemma}\label{lem:metric-fat}
Let $(X,d)$ be a geodesic metric space.  Let $A,B,C,D\in X$, let $E$ be the midpoint of a geodesic segment $[A,B]$, and set
\[
 \alpha=d(E,[A,C]\cup[B,C]),
 \qquad
 \beta=d(E,[A,D]\cup[B,D]).
\]
If
\[
 m=\max\{d(A,C),d(A,D),d(B,C),d(B,D)\},
\]
then
\begin{equation}\label{eq:metric-fat}
 \alpha+\beta
 \ge
 \frac12\bigl(d(A,B)+d(C,D)\bigr)-m.
\end{equation}
\end{lemma}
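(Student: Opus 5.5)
The plan is to apply the triangle inequality twice, once for each choice of third vertex, and then add the two resulting estimates.

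Start with $C$. Choose a point $Q\in[A,C]\cup[B,C]$ with $d(E,Q)=\alpha$; such a point exists because the segments are compact. By symmetry we may assume $Q\in[A,C]$. Since $Q$ lies on a geodesic from $A$ to $C$, we have $d(A,Q)+d(Q,C)=d(A,C)$. The triangle inequality through $Q$ then gives
\[
 d(A,E)\le d(A,Q)+\alpha,
 \qquad
 d(E,C)\le \alpha+d(Q,C).
\]
Adding these two inequalities yields
\[
 d(A,E)+d(E,C)\le d(A,C)+2\alpha\le m+2\alpha.
\]
Because $E$ is the midpoint of $[A,B]$, we have $d(A,E)=d(B,E)=\tfrac12 d(A,B)$. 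Therefore
\[
 d(E,C)\le m+2\alpha-\tfrac12 d(A,B).
\]
If instead $Q\in[B,C]$, the same computation with $B$ in place of $A$ gives the identical bound, since $d(B,C)\le m$ and $d(B,E)=\tfrac12 d(A,B)$.

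The argument for $D$ is the same, with $\beta$ in place of $\alpha$:
\[
 d(E,D)\le m+2\beta-\tfrac12 d(A,B).
\]

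To combine the two bounds, use the triangle inequality through $E$, namely $d(C,D)\le d(E,C)+d(E,D)$. This gives
\[
 d(C,D)\le 2m+2(\alpha+\beta)-d(A,B).
\]
Dividing by $2$ and rearranging gives exactly \eqref{eq:metric-fat}.

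No step here is a real obstacle. The only points needing care are that the infimum defining $\alpha$ (and $\beta$) is attained, which follows from compactness of geodesic segments, and that we must use both facts about $E$: it lies on a geodesic, and it is equidistant from $A$ and $B$. One could even avoid attainment by working with $\alpha+\varepsilon$ and letting $\varepsilon\to0$.
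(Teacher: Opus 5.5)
Your proof is correct and follows essentially the same route as the paper: both use the triangle inequality through a point of $[A,C]\cup[B,C]$ to get $d(E,C)\le 2\alpha+m-\tfrac12 d(A,B)$, obtain the analogous bound for $D$, and then combine them via $d(C,D)\le d(C,E)+d(E,D)$. The only cosmetic difference is that the paper bounds $d(E,C)$ using an arbitrary point $V$ of the two sides and takes the infimum afterwards, so it never needs the compactness argument showing that $\alpha$ is attained.
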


\begin{proof}
Take $V\in[A,C]$.  Since the segment is geodesic,
\[
 d(E,C)
 \le d(E,V)+d(V,C)
 =d(E,V)+d(A,C)-d(A,V).
\]
Also
\[
 d(A,V)\ge d(A,E)-d(E,V)=\frac12d(A,B)-d(E,V).
\]
Hence
\[
 d(E,C)\le2d(E,V)+m-\frac12d(A,B).
\]
The same estimate holds for $V\in[B,C]$, so
\[
 d(E,C)\le2\alpha+m-\frac12d(A,B).
\]
Similarly,
\[
 d(E,D)\le2\beta+m-\frac12d(A,B).
\]
Now use $d(C,D)\le d(C,E)+d(E,D)$ and rearrange.
\end{proof}

Return to the four points \eqref{eq:fourpoints}.  Let $E_a$ be the midpoint of a Kobayashi geodesic $[A_a,B_a]$ and put
\begin{align*}
 \alpha_a&=k_\Omega(E_a,[A_a,C_a]\cup[B_a,C_a]),\\
 \beta_a&=k_\Omega(E_a,[A_a,D_a]\cup[B_a,D_a]).
\end{align*}
By Lemma~\ref{lem:metric-fat}, \eqref{eq:AB-liminf}, \eqref{eq:CD-lower}, and \eqref{eq:cross-limsup},
\begin{equation}\label{eq:fat-liminf}
 \liminf_{a\to0}(\alpha_a+\beta_a)\ge\frac N2.
\end{equation}
Thus if $N>4M$, then for all sufficiently small $a$ one of the two triangles
\[
 (A_a,B_a,C_a),
 \qquad
 (A_a,B_a,D_a)
\]
has a point $E_a$ on the common side whose distance from the other two sides is greater than $M$.

It remains to show that, for fixed parameters, the whole triangles eventually lie in every prescribed neighborhood of $p$.  This follows from the normal coordinate.

\begin{lemma}[Kobayashi balls approach $p$]\label{lem:ball-near-p}
Fix $\zeta\in\D$ and $M>0$, and put $A_a=\Phi_a(\zeta,0)$.  If $X_a\in\Omega$ satisfies
\[
 k_\Omega(A_a,X_a)\le M,
\]
then
\[
 X_a\longrightarrow p=(0,0,1)
\]
uniformly over all such $X_a$ as $a\to0$.
\end{lemma}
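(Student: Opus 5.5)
Via the normal coordinate, I will use holomorphic functions on $\Omega$ with values in $\D$ that are very close to $1$ in modulus at $A_a$. Contraction will then force every point within Kobayashi distance $M$ to have $w$ close to $1$, and the defining inequality then forces $z_1,z_2$ small. Concretely, take the map $F(z_1,z_2,w)=(P,w)$ into $\B^2$ from Section~\ref{sec:lowerbounds}, and compose it with the holomorphic function $L_a(q,w)=s_a w/(1-c_a q)$, which maps $\B^2$ into $\D$ by \eqref{eq:Lball}. At $A_a$ we have $L_a(F(A_a))=W_a(0)/s_a=b_a$, and $1-b_a^2=a^7/(1-a^6)\to0$. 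Alternatively, $\nu_a$ itself gives $\nu_a(A_a)=0$, which is less useful. A cleaner choice is the second coordinate $w$ itself, since $|w|<1$ on $\Omega$. Then $w(A_a)=W_a(0)=\sqrt{1-a^6-a^7}\to1$, and $w:\Omega\to\D$ contracts Kobayashi distances.

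The key steps are as follows. First, by contraction $k_\D(w(A_a),w(X_a))\le k_\Omega(A_a,X_a)\le M$. Second, a Poincar\'e ball of fixed radius $M$ centred at a point $w_0$ with $|w_0|\to1$ has Euclidean radius tending to $0$ and is contained in $\{|w-w_0|\le C_M(1-|w_0|)\}$. Explicitly, the pseudo-hyperbolic distance is at most $\tanh M<1$, so the ball is a Euclidean disc of radius $\le \tanh M\,(1-|w_0|^2)/(1-\tanh^2M\,|w_0|^2)$. Hence $w(X_a)\to1$ uniformly, because $w(A_a)=W_a(0)\to1$ is real and positive. Third, since $X_a\in\Omega$, we have $|z_1|^8+|z_2|^8<1-|w(X_a)|^2\to0$, so $z_1(X_a),z_2(X_a)\to0$ uniformly. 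Together these give $X_a\to p$ uniformly.

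I expect no serious obstacle. The only point needing care is the uniform Euclidean smallness of hyperbolic $M$-balls near $\partial\D$, and that is the elementary disc computation above. It uses only that the estimate depends on $M$ and $|w(A_a)|$, not on $\zeta$ or $X_a$; indeed the argument does not even use the fixed $\zeta$ beyond $w(A_a)=W_a(0)$.
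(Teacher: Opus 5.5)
Your proposal is correct and follows the paper's proof essentially verbatim. The paper also contracts via the projection $w:\Omega\to\D$ and uses the pseudohyperbolic ball around the real point $W_a(0)\to1$ to get $w(X_a)\to1$ uniformly (it records the explicit bound $|1-w(X_a)|\le e^{2M}(1-w_a)$), then concludes from $|z_1|^8+|z_2|^8<1-|w|^2$; your preliminary detour through $L_a$ and $\nu_a$ is unnecessary.
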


\begin{proof}
The projection
\[
 \operatorname{pr}_w:\Omega\to\D,
 \qquad
 (z_1,z_2,w)\mapsto w,
\]
is holomorphic.  The $w$-coordinate of $A_a$ is
\[
 w_a=W_a(0)=\sqrt{1-a^6-a^7}\longrightarrow1,
\]
independently of $\zeta$.  Hence
\[
 k_\D(w_a,w(X_a))\le M.
\]
Put $q=\tanh M$.  The pseudohyperbolic ball of radius $q$ centered at the real point $w_a$ gives
\begin{equation}\label{eq:w-near-one}
 |1-w(X_a)|
 \le\frac{1+q}{1-q}(1-w_a)
 =e^{2M}(1-w_a).
\end{equation}
Thus $w(X_a)\to1$ uniformly.  Since $X_a\in\Omega$,
\[
 |z_1(X_a)|^8+|z_2(X_a)|^8
 <1-|w(X_a)|^2
 \le2|1-w(X_a)|,
\]
so the first two coordinates tend uniformly to zero as well.
\end{proof}

\begin{proposition}[Local failure of thin triangles]\label{prop:local-fat}
Let $V$ be any neighborhood of $p$ and let $M>0$.  There is a Kobayashi geodesic triangle contained in $V$ with a point on one side at distance greater than $M$ from the union of the other two sides.
\end{proposition}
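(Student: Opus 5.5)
The plan is to combine the fat-triangle estimate \eqref{eq:fat-liminf} with Lemma~\ref{lem:ball-near-p}, keeping the order of quantifiers straight.

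\emph{Step 1: fix the parameters.} Because $\tau$ is onto and $S$ has infinite Poincar\'e diameter (Hopf--Rinow, as in Section~\ref{sec:fourpoint}), we can choose $\zeta_1,\zeta_2\in\D$ with $N=k_S(\tau(\zeta_1),\tau(\zeta_2))>4M$. With these choices fixed, we define $\zeta_0$, $R$, $t$ as in \eqref{eq:R-midpoint} and the points $A_a,B_a,C_a,D_a$ as in \eqref{eq:fourpoints}. By Lemma~\ref{lem:proper} the Kobayashi geodesics $[A_a,B_a]$, $[A_a,C_a]$, $[B_a,C_a]$, $[A_a,D_a]$, $[B_a,D_a]$ exist.

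\emph{Step 2: produce a fat triangle.} Estimate \eqref{eq:fat-liminf} gives $\liminf_{a\to0}(\alpha_a+\beta_a)\ge N/2>2M$. Hence for all sufficiently small $a$ we have $\max\{\alpha_a,\beta_a\}>M$. So one of the triangles $(A_a,B_a,C_a)$ or $(A_a,B_a,D_a)$, built from the chosen geodesic sides, has the point $E_a$ on the side $[A_a,B_a]$ at distance greater than $M$ from the union of the other two sides.

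\emph{Step 3: localize the triangles near $p$.} This is the main point needing care. We need a uniform bound $k_\Omega(A_a,X)\le M'$ for every point $X$ on any of the sides, with $M'$ independent of $a$.

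Every point on a geodesic side lies within half that side's length of one of its endpoints. So it suffices to bound all pairwise distances among $A_a,B_a,C_a,D_a$ uniformly in $a$.

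Corollary~\ref{cor:asymptotic-upper} gives, for the fixed parameter points, that each of these distances has $\limsup_{a\to0}$ at most the bidisc distance of the parameters. This is a finite number. Thus for small $a$ all pairwise distances are at most some constant $K$.

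It follows that every point $X$ of either triangle satisfies $k_\Omega(A_a,X)\le K+K/2+K=:M'$. The estimate goes: from $X$ to the nearest endpoint of its side, then from that endpoint to $A_a$.

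Lemma~\ref{lem:ball-near-p}, applied with $\zeta=\zeta_1$ and radius $M'$, shows that all such $X$ converge to $p$ uniformly as $a\to0$. Therefore, for $a$ small enough, both triangles lie entirely in $V$.

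\emph{Conclusion.} Take $a$ small enough that Steps 2 and 3 hold simultaneously. The resulting triangle is contained in $V$ and is fat in the required sense.
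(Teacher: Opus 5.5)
Your proposal is correct and follows essentially the same route as the paper. You fix $\zeta_1,\zeta_2$ with $N>4M$, obtain the fat triangle from \eqref{eq:fat-liminf}, bound all vertex distances uniformly in $a$ via Corollary~\ref{cor:asymptotic-upper} (the paper uses the explicit bounds $2R+1$ and $R+1$ and the ball of radius $3R+3$ about $A_a$), and finish with Lemma~\ref{lem:ball-near-p}. Your constant $K+K/2+K$ overcounts, since the triangle inequality already gives $3K/2$, but any uniform bound suffices.
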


\begin{proof}
Because $\tau$ is onto and $S$ has infinite diameter, choose $\zeta_1,\zeta_2\in\D$ so that
\[
 N=k_S(\tau(\zeta_1),\tau(\zeta_2))>4M.
\]
Fix these parameters, and define $R$, $A_a,B_a,C_a,D_a$ as in Section~\ref{sec:fourpoint}.  By \eqref{eq:fat-liminf}, for all sufficiently small $a$ one of the two triangles has thickness greater than $M$ at $E_a$.

For these fixed parameters, Corollary~\ref{cor:asymptotic-upper} gives, after decreasing $a$ if necessary,
\[
 k_\Omega(A_a,B_a)\le2R+1
\]
and every cross-distance is at most $R+1$.  Hence every point of either triangle lies in the $k_\Omega$-ball of radius $3R+3$ around $A_a$.  Lemma~\ref{lem:ball-near-p} says that this whole ball is contained in $V$ for sufficiently small $a$.  The required triangle is therefore contained in $V$.
\end{proof}

Theorem~\ref{thm:main} is proved.

\medskip

{\bf AI Statements.} OpenAI ChatGPT was used in revising the exposition, checking parts of the literature, and checking and simplifying some of the arguments.  The author takes full responsibility for the mathematical content of the paper.

\end{document}